\newtheorem{theorem}{Theorem}[section]
\newtheorem{definition}[theorem]{Definition}
\newtheorem{proposition}[theorem]{Proposition}
\newtheorem{remark}[theorem]{Remark}
\numberwithin{equation}{section}
\def\RR{{\mathbb{R}}}
\def\NN{{\mathbb{N}}}
\def\CC{{\mathbb{C}}}
\def\Om{\Omega}
\def\bOm{\overline{\Om}}
\def\pOm{\partial\Omega}
\newcommand{\norm}[2]{{\left\|#1\right\|}_{#2}}
\newcommand{\qint}{\int_{\Omega_T}}
\newcommand{\sint}{\int_{\Sigma_T}}
\newcommand{\dt}[1]{\mathbf{DT}_{#1}}
\newcommand{\bt}[1]{\mathbf{BT}_{#1}}
\newcommand{\ZQ}{Z_{\Omega_T}}
\newcommand{\ZS}{Z_{\Sigma_T}}
\newcommand{\TT}{\mathcal{T}}
\newcommand{\sop}{\mathcal{S}}
\newcommand{\LB}{\Delta_{\Gamma}}
\newcommand{\GB}{\nabla_{\Gamma}}
\title[Boundary controllability]{Boundary null controllability for a heat equation with general dynamical  boundary conditions}
\author{Umberto Biccari}
\address{U.~Biccari, Basque Center for Applied Mathematics (BCAM), Alameda Mazarredo 14. 48009 Bilbao Basque Country  (Spain)}
\email{ubiccari@bcamath.org}
\author{Mahamadi Warma}
\address{M.~Warma, University of Puerto Rico, Faculty of Natural Sciences,
Department of Mathematics (Rio Piedras Campus), PO Box 70377 San Juan PR
00936-8377 (USA)}
\email{mahamadi.warma1@upr.edu, mjwarma@gmail.com}
\thanks{The work of the authors is partially supported by the Air Force Office of Scientific Research under the Award No: FA9550-15-1-0027}
\keywords{Heat equation, general dynamic boundary condition,  observability inequality, exact controllability from the boundary}
\subjclass[2010]{93B05, 35K20, 93B07}
\begin{document}

\begin{abstract}
Let $\Omega\subset\RR^N$ be a bounded open set with Lipschitz continuous boundary $\Gamma$. Let $\gamma>0$, $\delta\ge 0$ be real numbers and $\beta$ a nonnegative measurable function in $L^\infty(\Gamma)$.
Using some suitable Carleman estimates, we show that the linear heat equation $\partial_tu - \gamma\Delta u  = 0$ in $\Omega\times(0,T)$ with the non-homogeneous general dynamic boundary conditions $\partial_tu_{\Gamma} -\delta\Delta_\Gamma u_{\Gamma}+ \gamma\partial_{\nu}u + \beta u_{\Gamma} = g$  on $\Gamma\times(0,T)$ is always null controllable from the boundary for every $T>0$ and initial data $(u_0,u_{\Gamma,0})\in L^2(\Omega)\times L^2(\Gamma)$.
\end{abstract}

\maketitle

\section{Introduction and main results}

Let $\Omega\subset\mathbb R^N$ be a bounded open set with Lipschitz continuous boundary $\Gamma:=\partial\Omega$.
In the present paper we consider the following heat equation with dynamical boundary conditions
\begin{align}\label{heat_dbc}
	\begin{cases}
\partial_tu - \gamma\Delta u  = 0 &\mbox{ in }\; \Omega\times(0,T):=\Omega_T\\
\partial_tu_{\Gamma} -\delta\Delta_\Gamma u_{\Gamma}+ \gamma\partial_{\nu}u + \beta(x)u_{\Gamma} = g \;\;&\mbox{ on }\;\Gamma\times(0,T):=\Sigma_T\\
\left.(u,u_{\Gamma})\right|_{t=0} = (u_0,u_{\Gamma,0}) \;\;&\mbox{ in }\; \Omega\times\Gamma.
\end{cases}
\end{align}
Here, $\gamma>0$ and  $\delta\ge 0$ are real numbers, $\beta$ is a nonnegative measurable function which belongs to $L^{\infty}(\Gamma)$, $\partial_\nu u$ is the normal derivative of $u$ and $u_{\Gamma}$ denotes the trace on $\Gamma$ of the function $u$, whereas $u_0\in L^2(\Omega)$, $u_{\Gamma,0}\in L^2(\Gamma)$ and $\Delta_\Gamma$ denotes the Laplace-Beltrami operator on $\Gamma$. We emphasize that $u_{\Gamma,0}$ is not necessarily the trace of $u_0$, since we do not assume that $u_0$ has a trace. But if $u_0$ has a well-defined trace on $\Gamma$, then the trace must coincide with $u_{\Gamma,0}$. For example if the one dimensional case $N=1$, since solutions of \eqref{heat_dbc} are continuous on $\bOm$, then in that case we have that $u_{\Gamma,0}$ is the trace of $u_0$.

Several authors have studied the existence, uniqueness and the regularity of solutions to the system \eqref{heat_dbc}. We refer for example to the papers \cite{AMPR,FGGR1,FGGR,GW,KN,MS2013,VV,War} and their references. This type of boundary conditions has been also called generalized Wentzell or generalized Wentzell-Robin boundary conditions.

The main concern in the present paper is the investigate the null controllability of the system \eqref{heat_dbc} from the boundary, that is, given $T>0$ and initial data $(u_0,u_{\Gamma,0})\in L^2(\Omega)\times L^2(\pOm)$, is there a control function $g\in L^2(\Sigma_T)$ such that the unique mild solution (see Definition \ref{def-sol} below) satisfies $u(\cdot,T)=0$ in $\Omega$ and $u_{\Gamma}(\cdot,T)=0$ on $\Gamma$? 

The interior null controllability of the system \eqref{heat_dbc} for the case $\delta>0$ has been recently studied in \cite{MS2013}, that is, the case where $g=0$ and the first equation is replaced by $\partial_tu - \gamma\Delta u  = f|_{\omega}$  in $\Omega_T$. The authors have shown that if $\delta>0$, then for every open set $\omega\Subset\Omega$, $T>0$, and initial data $(u_0,u_{\Gamma,0})\in L^2(\Omega)\times L^2(\pOm)$, there is a control function $f\in L^2(\Omega_T)$ such that the unique mild solution of the associated system satisfies $u(\cdot,T)=0$ in $\Omega$ and $u_{\Gamma}(\cdot,T)=0$ on $\Gamma$. On the other hand, in the one dimensional case, that is, $\Omega=(0,1)$, it has been proved in \cite{KN} that the system  \eqref{heat_dbc} is approximately controllable, that is, for every initial data $(u_0,u_{0,0},u_{1,0})\in L^2(0,1)\times \CC^2$, $T>0$ and $\varepsilon>0$, there exists a control function $g\in L^2(0,T)$ such that the unique mild solution $u$ satisfies
\begin{align*}
\|u(\cdot,T)-u_0\|_{L^2(0,1)}+|u(0,T)-u_{0,0}|+|u(1,T)-u_{1,0}|<\varepsilon.
\end{align*}
We emphasize that the same approximate controllability of the system \eqref{heat_dbc} can be proved in the $N$-dimensional setting, but this will be a simple consequence of the stronger result obtained in the present paper.
Since interior null controllability of a system does not imply the null controllability of the system from the boundary, and as approximate controllability does not imply null controllability (but the converse is always true), we have that the results obtained in the present papers will complete the ones contained in \cite{MS2013} and will trivially imply the ones obtained in 
\cite{KN}. Our main result (see Theorem \ref{dbc_control_thm} below) states that the system is null controllable from the boundary for every $T>0$ and initial data $(u_0,u_{\Gamma,0})\in L^2(\Omega)\times L^2(\pOm)$. Its proof  is based on suitable Carleman estimates (Theorem \ref{carleman_thm}) for solutions of the adjoint system associated with \eqref{heat_dbc} which are also used to establish an observability inequality for solutions of the adjoint system. The obtained observability ineuqality is as usual equivalent to the null controllability of the system. We also notice that our results also include the case $\delta=0$, that is, when there is no  surface diffusion at the boundary. This case has not been considered in the study of the interior controllability in \cite{MS2013}.

\subsection{The functional setup}
Let $\Omega\subset\RR^N$ be a bounded open set with Lipschitz continuous boundary $\Gamma$.
For $r,q\in \lbrack 1,\infty ]$ with $1\leq r,q<\infty $ or $r=q=\infty $ we
endow the Banach space 
\begin{equation*}
\mathbb{X}^{r,q}(\overline{\Omega }):=L^{r}(\Omega )\times L^{q}(\Gamma)=\{(f,g):\;f\in L^{r}(\Omega ),\;g\in L^{q}(\Gamma)\}
\end{equation*}%
with the norm 
\begin{equation*}
\Vert (f,g)\Vert _{\mathbb{X}^{r,q}(\overline{\Omega })}:=
\begin{cases}
\displaystyle\Vert f\Vert
_{L^{r}(\Omega )}+\Vert g\Vert _{L^{q}(\Gamma)}\; &\mbox{ if }\; 1\leq r\ne q<\infty \\
\displaystyle(\Vert
f\Vert _{L^{r}(\Omega )}^{r}+\Vert g\Vert _{L^{r}(\Gamma)}^{r})^{%
\frac{1}{r}}\; &\mbox{ if }\; 1\leq r=q<\infty 
\end{cases}
\end{equation*}%
and 
\begin{equation*}
\Vert (f,g)\Vert _{\mathbb{X}^{\infty ,\infty }(\overline{\Omega })}:=\max
\{\Vert f\Vert _{L^{\infty }(\Omega )},\Vert g\Vert _{L^{\infty }(\Gamma)}\}.
\end{equation*}%
We will simple write $\mathbb{X}^{r}(\overline{\Omega }):=\mathbb{X}^{r,r}(\overline{\Omega })$.  We notice that $\mathbb X^r(\bOm)$ can be identified with the Lebesgue space $L^r(\bOm,\mu)$ where the measure $\mu$ on $\bOm$ is defined for every measurable set $B\subset\bOm$ by
\begin{align*}
\mu(B):=|\Omega\cap B|+\sigma(B\cap\Gamma).
\end{align*}
Here $|\cdot|$ denotes the $N$-dimensional Lebesgue measure on $\Omega$ and $\sigma$ is the $(N-1)$-dimensional Lebesgue surface measure on $\Gamma$. In addition we have that $\mathbb X^2(\bOm)$ is a Hilbert space with the scalar product
\begin{align*}
	\langle (f_1,g_1),(f_2,g_2)\rangle_{\mathbb X^2(\bOm)} = \langle f_1,f_2\rangle_{L^2(\Omega)} + \langle g_1,g_2\rangle_{L^2(\Gamma)}
	=\int_{\Omega}f_1f_2\;dx+\int_{\Gamma}g_1g_2\;d\sigma.
\end{align*}
Let
\begin{align*}
W^{1,2}(\Omega)=\left\{u\in L^2(\Omega):\;\int_{\Omega}|\nabla u|^2\;dx<\infty\right\}
\end{align*}
endowed with the norm
\begin{align*}
\|u\|_{W^{1,2}(\Omega)}:=\left(\int_{\Omega}|u|^2\;dx+\int_{\Omega}|\nabla u|^2\;dx\right)^{\frac 12}
\end{align*}
be the first order Sobolev space. The space $W^{1,2}(\Gamma)$ is defined similarly by
\begin{align*}
W^{1,2}(\Gamma)=\left\{u\in L^2(\Gamma):\; \int_{\Gamma}|\nabla_\Gamma u|^2\;d\sigma<\infty\right\},
\end{align*}
where $\nabla_\Gamma$ denotes the Riemannian gradient (see Section \ref{sec-LB} below).
We also introduce the fractional order Sobolev space
\begin{align*}
W^{\frac 12,2}(\Gamma):=\left\{u\in L^2(\Gamma):\;\int_{\Gamma}\int_{\Gamma}\frac{|u(x)-u(y)|^2}{|x-y|^{N}}\;d\sigma_xd\sigma_y<\infty\right\}
\end{align*}
and we endow it with the norm
\begin{align*}
\|u\|_{W^{\frac 12,2}(\Gamma)}=\left(\int_{\Gamma}|u|^2\;d\sigma+\int_{\Gamma}\int_{\Gamma}\frac{|u(x)-u(y)|^2}{|x-y|^{N}}\;d\sigma_xd\sigma_y\right)^{\frac 12}.
\end{align*}
Since $\Omega$ is assumed to have a Lipschitz continuous boundary, then we have the continuous embedding $W^{1,2}(\Omega)\hookrightarrow W^{\frac 12,2}(\Gamma)$. That is, there exists a constant $C>0$ such that 
for every $u\in W^{1,2}(\Omega)$, we have
\begin{align*}
\|u_{\Gamma}\|_{W^{\frac 12,2}(\Gamma)}\le C\|u\|_{W^{1,2}(\Omega)}. 
\end{align*}
For a real number $\delta\ge 0$ we let
\begin{align*}
\mathbb W_\delta^{1,2}(\bOm):=\left\{U:=(u,u_{\Gamma}):\; u\in W^{1,2}(\Omega),\;\delta u\in W^{1,2}(\Gamma)\right\},
\end{align*}
and we endow it with the norm
\begin{align*}
\|(u,u_{\Gamma})\|_{\mathbb W_\delta^{1,2}(\bOm)}:=\left(\|u\|_{W^{1,2}(\Omega)}^2+\|u_{\Gamma}\|_{W^{1,2}(\Gamma)}^2\right)^{\frac 12}\;\;\mbox{ if }\;\delta>0,
\end{align*}
 and
\begin{align*}
\|(u,u_{\Gamma})\|_{\mathbb W_0^{1,2}(\bOm)}:=\left(\|u\|_{W^{1,2}(\Omega)}^2+\|u_{\Gamma}\|_{W^{\frac 12,2}(\Gamma)}^2\right)^{\frac 12} \;\;\mbox{ if }\;\delta=0.
\end{align*}
By definition,  for every $\delta\ge 0$, we have the continuous embedding $\mathbb W_\delta^{1,2}(\bOm)\hookrightarrow\mathbb X^2(\bOm)$.

\subsection{The Laplace-Beltrami operator}\label{sec-LB}
We present some basic notion on the Laplace-Beltrami operator. 
Recall that the boundary $\Gamma$ of the open set $\Omega\subset\RR^N$ can be viewed as a Riemannian manifold endowed with the natural metric inherited from $\RR^N$, given in local coordinates by $\sqrt{\det G}\;dy_1\cdots dy_{N-1}$, where $G=(g_{ij})$ denotes the metric tensor. Let $\nabla_\Gamma$ be the Riemannian gradient. Then the so called Laplace-Beltrami  operator $\LB$ can be at first defined for $u,v\in C^2(\Gamma)$ by the formula
\begin{align}\label{surf_div_thm}
-	\int_{\Gamma} v\LB u\, d\sigma = \int_{\Gamma} \langle\GB u,\GB v\rangle_{\Gamma}\,d\sigma,
\end{align}
where $\langle\cdot,\cdot\rangle_{\Gamma}$ is the Riemannian inner product of tangential vectors on $\Gamma$. Throughout the following we shall just denotes $ \langle\GB u,\GB v\rangle_{\Gamma}=\GB u\cdot\GB v$.
Letting $(g^{ij})=(g_{ij})^{-1}$, then $\LB$ is given in local coordinates by
\begin{align}\label{LB_def}
\LB u= \frac{1}{\sqrt{\det G}}\sum_{i,j=1}^{N-1} \frac{\partial}{\partial y_i}\left(\sqrt{\det G}\, g^{ij}\frac{\partial u}{\partial y_j}\right).
\end{align} 
Using \eqref{LB_def} we have that $\LB$ can be considered as a bounded linear operator from $W^{s+2,2}(\Gamma)$ to $W^{s,2}(\Gamma)$, for any $s\in\RR$. This implies that the formula \eqref{surf_div_thm} extends by density to $u,v\in W^{1,2}(\Gamma)$, where the integral in the left-hand side is to be interpreted in the distributional sense, that is, as $\LB u\in W^{-1,2}(\Gamma):=(W^{1,2}(\Gamma))^\star$. Letting 
\begin{align*}
D(\LB):=\{u\in W^{1,2}(\Gamma),\;\LB u\in L^2(\Gamma)\},
\end{align*}
we have that $-\LB$ is a self-adjoint and nonnegative operator on $L^2(\Gamma)$ (see, e.g., \cite[p. 309]{Tay}). This implies that $\LB$  generates a strongly continuous semigroup on $L^2(\Gamma)$ which is also analytic.  If $\Gamma$ is smooth (say of class $C^2$), then one can show that $D(\LB)=W^{2,2}(\Gamma)$. 

Finally, for the sake of completeness, we mention that in this paper we will never use the local formula \eqref{LB_def} for the Laplace-Beltrami operator, but rather the so-called surface divergence theorem given in \eqref{surf_div_thm}.
Moreover, we recall the following interpolation inequality (see \cite[Theorem 1.3.3]{Trieb}). There exists a constant $C>0$ such that the estimate
\begin{align}\label{GB_interpol}
	\norm{u}{W^{1,2}(\Gamma)}^2\leq C\norm{u}{L^2(\Gamma)}\norm{u}{D(\LB)}
\end{align}
holds for every $u\in D(\LB)$, where the space $D(\LB)$ is endowed with the graph norm defined by 
\begin{align*}
\|u\|_{D(\LB)}=\|u\|_{L^2(\Gamma)}+\|\LB u\|_{L^2(\Gamma)}.
\end{align*}
For more details on this topic we refer to \cite[Chapter 3]{Jost} or \cite[Sections 2.4 and 5.1]{Tay} and their references.

\subsection{The well-posedness}

In this (sub)section we discuss the well-posedness of the system \eqref{heat_dbc}.  
First, let $\mathcal E_\delta$ be the bilinear symmetric form on $\mathbb X^2(\bOm)$ with domain $D(\mathcal E_\delta):=\mathbb W_\delta^{1,2}(\bOm)$ and given for every $U:=(u,u_{\Gamma}), V:=(v,v_{\Gamma})\in\mathbb W_\delta^{1,2}(\bOm)$ by
\begin{align}
\mathcal E_\delta(U,V)=\gamma\int_{\Omega}\nabla u\cdot\nabla v\;dx+\delta\int_{\Gamma}\nabla_\Gamma u_\Gamma\cdot\nabla_\Gamma v_\Gamma\;d\sigma+\int_{\Gamma}\beta(x) u_\Gamma v_\Gamma\;d\sigma.
\end{align}
We assume that $\beta\in L^\infty(\Gamma)$ is measurable and there exists a constant $\beta_0>0$ such that
\begin{align}\label{beta}
\beta(x)\ge \beta_0\;\;\sigma\mbox{-a.e. on }\;\Gamma.
\end{align}
It is well-known that the form $\mathcal E_\delta$ is closed in $\mathbb X^2(\bOm)$, continuous and elliptic. 
Under the assumption \eqref{beta}, it is also coercive, that is, there is a constant $C>0$ such that for every $U=(u,u_\Gamma)\in\mathbb W_\delta^{1,2}(\bOm)$, we have
\begin{align}\label{sobo}
\|U\|_{\mathbb X^2(\bOm)}^2=\int_{\Omega}|u|^2\;dx+\int_{\Gamma}|u_\Gamma|^2\;d\sigma\le C\mathcal E_\delta(U,U).
\end{align}
Second, let $A_\delta$ be the linear self-adjoint operator in $\mathbb X^2(\bOm)$ associated with $\mathcal E_\delta$ in the sense that
\begin{equation}\label{op-A}
\begin{cases}
D(A_\delta)=\{U\in \mathbb W_\delta^{1,2}(\bOm),\;\exists\;F\in \mathbb X^2(\bOm),\;\;\mathcal E_\delta(U,\Phi)=\langle F,\Phi\rangle_{\mathbb X^2(\bOm)},\;\forall\;\Phi\in \mathbb W_\delta^{1,2}(\bOm)\}\\
A_\delta U=-F.
\end{cases}
\end{equation}

The following characterization of the operator $A_\delta$ can be found in \cite{War}.

\begin{equation}
\begin{cases}
D(A_\delta)=\{U:=(u,u_{\Gamma})\in \mathbb W_\delta^{1,2}(\bOm),\; \Delta u\in L^2(\Omega),\;\delta\Delta_\Gamma u_{\Gamma}-\partial_\nu u\;\mbox{ exists in }\; L^2(\Gamma)\}\\
A_\delta U=\Big(\gamma\Delta u,\delta\Delta_\Gamma u_{\Gamma}-\gamma\partial_\nu u-\beta u_{\Gamma}\Big),
\end{cases}
\end{equation}
that is,  on its domain, $A_\delta$ is the matrix operator
\[
A_\delta=\left(
\begin{matrix}
\gamma \Delta &0\\
-\gamma\partial_\nu &\delta\Delta_\Gamma-\beta
\end{matrix}
\right).
\]

Throughout the remainder of the article for a function $F=(f,g)\in\mathbb X^2(\bOm)$, by $F\ge 0$, we mean that $f\ge 0$ a.e. in $\Omega$ and $g\ge 0$ $\sigma$-a.e. on $\Gamma$. We shall also denote $F^+=(f^+,g^+)$ and $F^-=(f^-,g^-)$ where $f^+=\sup\{f,0\}$ and $f^-=\sup\{-f,0\}$.
We have the following result.

\begin{proposition}\label{pro-sg}
The operator $A_\delta$ generates a strongly continuous  analytic semigroup $(e^{tA_\delta})_{t\ge 0}$ on $\mathbb X^2(\bOm)$ which is also submarkovian. That is, the semigroup is positive and contractive on $\mathbb X^\infty(\bOm)$.
\end{proposition}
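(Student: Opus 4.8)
The plan is to deduce the entire statement from the classical theory of symmetric Dirichlet forms applied to $\mathcal E_\delta$. By the standing assumptions the form $\mathcal E_\delta$ is densely defined, symmetric, closed and nonnegative on the Hilbert space $\mathbb X^2(\bOm)$ --- in fact coercive by \eqref{sobo} --- so the operator $-A_\delta$ associated with it through \eqref{op-A} is a nonnegative self-adjoint operator. By the representation theorem for closed symmetric forms, $A_\delta=-(-A_\delta)$ then generates a strongly continuous contraction semigroup $(e^{tA_\delta})_{t\ge0}$ on $\mathbb X^2(\bOm)$, and since $-A_\delta$ is nonnegative self-adjoint the spectral theorem gives at once analyticity (of angle $\pi/2$). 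This proves the first assertion, so it only remains to show that the semigroup is submarkovian.

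For this I would verify that $\mathcal E_\delta$ is a Dirichlet form by checking the two Beurling--Deny criteria. For the positivity of the semigroup, I would show that $U=(u,u_\Gamma)\in\mathbb W_\delta^{1,2}(\bOm)$ implies $|U|:=(|u|,|u_\Gamma|)\in\mathbb W_\delta^{1,2}(\bOm)$ together with $\mathcal E_\delta(|U|,|U|)\le\mathcal E_\delta(U,U)$. By the chain rule $\nabla|u|=(\operatorname{sgn}u)\nabla u$ a.e. in $\Omega$ and $\GB|u_\Gamma|=(\operatorname{sgn}u_\Gamma)\GB u_\Gamma$ $\sigma$-a.e. on $\Gamma$, the two gradient terms of $\mathcal E_\delta$ are unchanged, and since $|u_\Gamma|^2=u_\Gamma^2$ the zero-order term $\int_\Gamma\beta|u_\Gamma|^2\,d\sigma$ is unchanged as well; thus equality holds and the first criterion is satisfied.

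For the contractivity on $\mathbb X^\infty(\bOm)$ I would verify the second criterion, namely that the unit normal contraction $U^\#:=\big((0\vee u)\wedge1,\,(0\vee u_\Gamma)\wedge1\big)$ of a real-valued $U\in\mathbb W_\delta^{1,2}(\bOm)$ lies again in $\mathbb W_\delta^{1,2}(\bOm)$ and satisfies $\mathcal E_\delta(U^\#,U^\#)\le\mathcal E_\delta(U,U)$. Here $\nabla(u^\#)=(\nabla u)\mathbf{1}_{\{0<u<1\}}$ a.e., so $|\nabla u^\#|\le|\nabla u|$ and likewise $|\GB u_\Gamma^\#|\le|\GB u_\Gamma|$, whence neither gradient term increases, while $|u_\Gamma^\#|\le|u_\Gamma|$ shows the $\beta$-term does not increase either. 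Combined with positivity this yields $0\le e^{tA_\delta}U\le1$ whenever $0\le U\le1$, which is exactly the asserted contractivity on $\mathbb X^\infty(\bOm)$.

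The only delicate point, and the one I expect to require care, is the compatibility of the pointwise operations $u\mapsto|u|$ and $u\mapsto u^\#$ with the bulk--boundary coupling built into the form domain. Since there $u_\Gamma$ is the trace of $u$, one has to check that these Lipschitz operations commute with the trace (so that $|U|$ and $U^\#$ are genuine coupled elements of $\mathbb W_\delta^{1,2}(\bOm)$) and preserve membership in the boundary space, which is $W^{1,2}(\Gamma)$ when $\delta>0$ and $W^{\frac12,2}(\Gamma)$ when $\delta=0$. In the case $\delta=0$ the boundary seminorm does not enter $\mathcal E_\delta$, so only membership matters and it follows from the trace embedding $W^{1,2}(\Omega)\hookrightarrow W^{\frac12,2}(\Gamma)$; for $\delta>0$ one uses that normal contractions operate on $W^{1,2}(\Gamma)$ via the chain rule on the manifold. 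These being standard facts about Sobolev and Dirichlet spaces, the verification of the two criteria goes through and completes the proof.
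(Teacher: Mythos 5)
Your proposal is correct and follows essentially the same route as the paper: generation and analyticity from the closed, symmetric, densely defined form $\mathcal E_\delta$, and the submarkovian property via the two Beurling--Deny criteria (the paper verifies the equivalent formulations $\mathcal E_\delta(U^+,U^-)=0$ and $\mathcal E_\delta(U\wedge 1,U\wedge 1)\le\mathcal E_\delta(U,U)$ for $0\le U$, citing Davies, while you check $|U|$ and the unit contraction directly). Your explicit attention to the compatibility of these pointwise operations with the trace coupling in $\mathbb W_\delta^{1,2}(\bOm)$ is a sound addition but does not change the argument.
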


\begin{proof}
Since the symmetric form $\mathcal E_\delta$ is closed, continuous, elliptic and $\mathbb W_\delta^{1,2}(\bOm)$ is dense in $\mathbb X^2(\bOm)$,  we have that $A_\delta$ generates a strongly continuous and analytic semigroup $(e^{tA_\delta})_{t\ge 0}$ on $\mathbb X^2(\bOm)$.  Next we show that the semigroup is positive. Let $U=(u,u_\Gamma)\in \mathbb W_\delta^{1,2}(\bOm)$. Then $U^+=(u^+,u_\Gamma^+)\in \mathbb W_\delta^{1,2}(\bOm)$ and a simple calculation gives $\mathcal E_\delta(U^+,U^-)=0$. By \cite[Theorem 1.3.2]{Dav}, this implies that the semigroup is positive. Next, for $0\le U=(u,u_\Gamma)\in \mathbb W_\delta^{1,2}(\bOm)$, we let $U\wedge 1=(u\wedge 1,u_\Gamma\wedge 1)$. It is also easy to see that for every $0\le U\in \mathbb W_\delta^{1,2}(\bOm)$ we have that $U\wedge 1\in \mathbb W_\delta^{1,2}(\bOm)$ and a simple calculation gives $\mathcal E_\delta(U\wedge 1,U\wedge 1)\le\mathcal E_\delta(U,U)$.
By \cite[Theorem 1.3.3]{Dav}, this implies that the semigroup is contractive on $\mathbb X^\infty(\bOm)$. We have shown that the semigroup is submarkovain and the proof is finished.
\end{proof}

We adopt the following notion of solutions to the system \eqref{heat_dbc}.

\begin{definition}\label{def-sol}
Let $g\in L^2(\Sigma_T)$, $F:=(0,g)$ and $U_0:=(u_0,u_{0,\Gamma})\in\mathbb X^2(\bOm)$.
\begin{enumerate}
\item A function $u$ is said to be a strong solution of \eqref{heat_dbc} if $U:=(u,u|_{\Gamma})\in W^{1,2}((0,T);\mathbb X^2(\bOm))\cap L^2((0,T);D(A_\delta))$ and fulfills \eqref{heat_dbc}.

\item A function $u$ is called a mild solution of \eqref{heat_dbc} if $U:=(u,u|_{\Gamma})\in C([0,T];\mathbb X^2(\bOm))$ and satisfies
\begin{align}\label{mild-sol}
U(\cdot, t)=e^{tA_\delta}U_0+\int_0^te^{(t-s)A_\delta}F(\cdot,s)\;ds\;\mbox{ in }\;\mathbb X^2(\bOm),\;\;t\in [0,T].
\end{align}
\end{enumerate}
\end{definition}

Next, using a simple integration by parts, we have that the adjoint system associated to  \eqref{heat_dbc} is  given by the following backward problem

\begin{align}\label{heat_dbc_adj}
\begin{cases}
\partial_t\phi + \gamma\Delta\phi = 0 \;\;\;&\mbox{ in}\;\;\Omega\times(0,T)=\Omega_T\\
\partial_t\phi_{\Gamma}+\delta\LB\phi_\Gamma  - \gamma\partial_{\nu}\phi - \beta\phi_\Gamma = 0 \;\;\;&\mbox{ on }\;\;\Gamma\times(0,T)=\Sigma_T\\
\left.(\phi,\phi_{\Gamma})\right|_{t=T} = (\phi_T,\phi_{\Gamma,T})\;\;&\mbox{ in }\;  \Omega\times\Gamma.
\end{cases}
\end{align}
Here too, $\phi_{\Gamma,T}$ is not a priori the trace of  $\phi_{T}$ since we did not assume that $\phi_T$ has a trace, but if $\phi_T$ has a well defined trace then the trace must coincide with $\phi_{\Gamma,T}$.

We notice that using the operator $A_\delta$, we have that the system \eqref{heat_dbc} can be rewritten as an abstract Cauchy problem
\begin{equation}\label{ACP1}
\partial_tU-A_\delta U=F\;\;\mbox{ in }\;\Omega_T\times\Sigma_T,\;\;\;U(\cdot,0)=(u_0,u_{\Gamma,0})\;\mbox{ on }\;\Omega\times\Gamma,
\end{equation}
where $U:=(u,u_\Gamma)$ and $F:=(0,g)$. Similarly, we have that the system \eqref{heat_dbc_adj} can be rewritten as
\begin{align}\label{ACP2}
\partial_t\Phi+A_\delta \Phi=0\;\;\;\mbox{ in }\;\Omega_T\times\Sigma_T,\;\;\;\Phi(\cdot,T)=(\phi_T,\phi_{\Gamma,T})\;\mbox{ on }\;\Omega\times\Gamma,
\end{align}
where $\Phi:=(\phi,\phi_\Gamma)$.

We have the following result of existence and uniqueness of solutions as a direct consequence of the generation result in Proposition \ref{pro-sg}.

\begin{proposition}\label{ex-sol}
The following assertions hold.
\begin{enumerate}
\item For every $U_0:=(u_0,u_{\Gamma,0})\in\mathbb X^2(\bOm)$ and $g\in L^2(\Sigma_T)$, the Cauchy problem \eqref{ACP1}, and hence the system \eqref{heat_dbc}, has a unique mild solution $U$ given by \eqref{mild-sol}. Moreover, there exists a constant $C>0$ such that
\begin{align}\label{est-msol}
\|U\|_{C([0,T];\mathbb X^2(\bOm))}\le C\left(\|U_0\|_{\mathbb X^2(\bOm)}+\|g\|_{L^2(\Sigma_T)}\right).
\end{align}

\item For every $U_0:=(u_0,u_{\Gamma,0})\in\mathbb W_\delta^{1,2}(\bOm)$ and $g\in L^2(\Sigma_T)$, the Cauchy problem \eqref{ACP1}, and hence the system \eqref{heat_dbc}, has a unique strong solution $U$ and there is a constant $C>0$ such that
\begin{align}
\|U\|_{W^{1,2}((0,T);\mathbb X^2(\bOm))\cap L^2((0,T);D(A_\delta))}\le C\left(\|U_0\|_{\mathbb W_\delta^{1,2}(\bOm)}+\|g\|_{L^2(\Sigma_T)}\right).
\end{align}

\item For every $\Phi_T:=(\phi_T,\phi_{\Gamma,T})\in \mathbb X^2(\bOm)$ {\em (resp. $\Phi_T\in\mathbb W_\delta^{1,2}(\bOm)$)} the Cauchy problem \eqref{ACP2}, and hence the backward system \eqref{heat_dbc_adj}, has a unique mild solution $\Phi$ {\em(resp. strong solution)} given by 
\begin{align*}
\Phi(\cdot,t)=e^{(T-t)A_\delta}\Phi_T\;\mbox{ in }\; \mathbb X^2(\bOm),\;t\in [0,T].
\end{align*}
\end{enumerate}
\end{proposition}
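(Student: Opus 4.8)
The plan is to read both \eqref{ACP1} and \eqref{ACP2} as abstract Cauchy problems in the Hilbert space $\mathbb X^2(\bOm)$ governed by the generator $A_\delta$, and to deduce all three assertions from Proposition \ref{pro-sg}. For part (a) I would first check that the forcing term $F=(0,g)$ lies in $L^2((0,T);\mathbb X^2(\bOm))$: the $\mathbb X^2(\bOm)$-norm of $F(\cdot,s)$ is exactly $\|g(\cdot,s)\|_{L^2(\Gamma)}$, so by Fubini $\int_0^T\|F(\cdot,s)\|_{\mathbb X^2(\bOm)}^2\,ds=\|g\|_{L^2(\Sigma_T)}^2$. Since $A_\delta$ generates a strongly continuous semigroup, the variation-of-parameters formula \eqref{mild-sol} defines an element of $C([0,T];\mathbb X^2(\bOm))$, and uniqueness is immediate because Definition \ref{def-sol} declares a mild solution to be any $U$ satisfying \eqref{mild-sol}. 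For the estimate \eqref{est-msol} I would use that the form $\mathcal E_\delta$ is coercive, so $-A_\delta$ is bounded below by a positive constant and hence $\|e^{tA_\delta}\|_{\mathbb X^2(\bOm)}\le 1$; bounding \eqref{mild-sol} termwise and applying Cauchy--Schwarz in time to pass from the $L^1$- to the $L^2$-norm of $F$ gives $\|U\|_{C([0,T];\mathbb X^2(\bOm))}\le \|U_0\|_{\mathbb X^2(\bOm)}+\sqrt T\,\|g\|_{L^2(\Sigma_T)}$.

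For part (b) the extra regularity comes from the analyticity asserted in Proposition \ref{pro-sg} together with the self-adjointness of $A_\delta$. Because $-A_\delta$ is a nonnegative self-adjoint operator on a Hilbert space, the Cauchy problem \eqref{ACP1} enjoys maximal $L^2$-regularity: for $F\in L^2((0,T);\mathbb X^2(\bOm))$ and initial datum in the trace space $(\mathbb X^2(\bOm),D(A_\delta))_{1/2,2}$, the mild solution is in fact a strong solution in $W^{1,2}((0,T);\mathbb X^2(\bOm))\cap L^2((0,T);D(A_\delta))$, together with the corresponding a priori bound. The step I would make explicit is the identification of this trace space with the form domain, namely $(\mathbb X^2(\bOm),D(A_\delta))_{1/2,2}=D((-A_\delta)^{1/2})=\mathbb W_\delta^{1,2}(\bOm)$, which is precisely the hypothesis imposed on $U_0$; this is the standard fact that the form domain of a closed coercive symmetric form equals the $\tfrac12$-power domain of its associated operator. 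Combined with $\|F\|_{L^2((0,T);\mathbb X^2(\bOm))}=\|g\|_{L^2(\Sigma_T)}$, the maximal-regularity estimate yields the claimed bound with right-hand side $\|U_0\|_{\mathbb W_\delta^{1,2}(\bOm)}+\|g\|_{L^2(\Sigma_T)}$.

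Finally, part (c) reduces to the previous two by time reversal. Setting $\Psi(t):=\Phi(T-t)$ turns the backward problem \eqref{ACP2} into the forward homogeneous problem $\partial_t\Psi=A_\delta\Psi$ with $\Psi(0)=\Phi_T$ and zero forcing, whose unique solution is $\Psi(t)=e^{tA_\delta}\Phi_T$ by parts (a) and (b); undoing the substitution gives $\Phi(\cdot,t)=e^{(T-t)A_\delta}\Phi_T$, a mild solution when $\Phi_T\in\mathbb X^2(\bOm)$ and a strong solution when $\Phi_T\in\mathbb W_\delta^{1,2}(\bOm)$. The only genuinely delicate point in the whole argument is the maximal $L^2$-regularity statement and the interpolation identification in part (b); in the self-adjoint setting both are automatic (the regularity following, e.g., from the spectral theorem or de Simon's theorem), but they are where the analyticity and self-adjointness from Proposition \ref{pro-sg} are really used, everything else being routine $C_0$-semigroup theory.
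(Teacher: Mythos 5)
Your argument is correct and complete. It is worth noting that the paper itself does not write out a proof of this proposition: it simply remarks that the statement follows from the general well-posedness theory for Cauchy problems governed by maximal monotone operators in Brezis's monograph \cite{Bre}, and that a full proof (for smooth $\Omega$) is in \cite{MS2013}. Your route is the linear-semigroup counterpart of that reference: you get part (a) from contractivity of $(e^{tA_\delta})_{t\ge 0}$ and Duhamel's formula, part (b) from maximal $L^2$-regularity for the nonnegative self-adjoint operator $-A_\delta$ together with the identification $D((-A_\delta)^{1/2})=D(\mathcal E_\delta)=\mathbb W_\delta^{1,2}(\bOm)$ (Kato's representation theorem for closed symmetric forms), and part (c) by time reversal. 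The Brezis route instead views $-A_\delta$ as the subdifferential of the convex functional $U\mapsto\frac12\mathcal E_\delta(U,U)$ and reads off the $W^{1,2}((0,T);\mathbb X^2(\bOm))$ regularity for data in the form domain from the energy identities of the gradient flow; the two approaches deliver the same conclusions, and yours has the advantage of being self-contained in the linear setting and of making explicit exactly where the self-adjointness and analyticity from Proposition \ref{pro-sg} enter. The only points worth flagging, both harmless, are: (i) contractivity of the semigroup on $\mathbb X^2(\bOm)$ needs only $\mathcal E_\delta\ge 0$, not the full coercivity \eqref{sobo} (which gives exponential decay, more than you use); and (ii) for $\delta=0$ the identification of the form domain with $\mathbb W_0^{1,2}(\bOm)$ requires the observation that the graph norm of $\mathcal E_0$, which controls only $\|u\|_{W^{1,2}(\Omega)}$ and $\|u_\Gamma\|_{L^2(\Gamma)}$, is equivalent to the stated norm involving $\|u_\Gamma\|_{W^{\frac12,2}(\Gamma)}$ via the continuous trace embedding recorded in the functional setup.
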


The generation of semigroup given in Proposition \ref{pro-sg} and the proof of the existence and regularity of mild and strong  solutions stated in Proposition \ref{ex-sol} can be done by using the general well-posedness results of Cauchy problems associated with maximal monotone operators contained in the monograph by Brezis \cite{Bre}. We also mention that Proposition \ref{ex-sol} has been also completely proved in \cite{MS2013} by assuming that $\Omega$ is smooth. 


\begin{remark}
{\em It is easy to see that every strong solution is also a mild solution and a mild solution satisfying the regularity given in Definition \ref{def-sol}(a) is also a strong solution. For more detail we refer to \cite{Bre,MS2013} and their references.}
\end{remark}

\subsection{The main result}\label{sub-mr}

In this (sub)section we state the results concerning the null controllability of the system \eqref{heat_dbc}. We start with a necessary and sufficient condition for the system to be null controllable.

\begin{proposition}
The following assertions are equivalent.
\begin{enumerate}
\item[(i)] The system \eqref{heat_dbc} is null controllable in time $T>0$.

\item[(ii)] For every $(u_0,u_{\Gamma,0})\in\mathbb X^2(\bOm)$, there exists a control function $g\in L^2(\Sigma_T)$ such that
\begin{align}\label{CFT}
-\int_0^T\int_{\Gamma}g(x,t)\phi_{\Gamma}(x,t)\;d\sigma dt=\int_{\Omega}u_0(x)\phi(x,0)\;dx+\int_{\Gamma}u_{\Gamma,0}(x)\phi_{\Gamma}(x,0)\;d\sigma,
\end{align}
for every $(\phi_T,\phi_{\Gamma,T})\in \mathbb X^2(\bOm)$, where $\phi$ is the unique mild solution of the backward system \eqref{heat_dbc_adj} with final data $(\phi_T,\phi_{\Gamma,T})$. 
\end{enumerate}
\end{proposition}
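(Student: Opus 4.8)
The plan is to prove the equivalence by the classical duality (transposition) argument, whose only nontrivial ingredient is the self-adjointness of $A_\delta$ recorded in Proposition \ref{pro-sg}. Since $\mathbb X^2(\bOm)$ is a Hilbert space and the adjoint final data $\Phi_T:=(\phi_T,\phi_{\Gamma,T})$ is permitted to range over all of $\mathbb X^2(\bOm)$, the terminal condition $U(\cdot,T)=0$ that defines null controllability in time $T$ is equivalent to the scalar identity $\langle U(\cdot,T),\Phi_T\rangle_{\mathbb X^2(\bOm)}=0$ holding for every $\Phi_T\in\mathbb X^2(\bOm)$. Thus the whole statement reduces to computing this pairing and identifying it with the two sides of \eqref{CFT}.

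For the computation I would start from the mild-solution representation \eqref{mild-sol} with $F=(0,g)$, so that
\[
\langle U(\cdot,T),\Phi_T\rangle_{\mathbb X^2(\bOm)}=\langle e^{TA_\delta}U_0,\Phi_T\rangle_{\mathbb X^2(\bOm)}+\int_0^T\langle e^{(T-s)A_\delta}F(\cdot,s),\Phi_T\rangle_{\mathbb X^2(\bOm)}\,ds.
\]
Because $A_\delta$ is self-adjoint, the semigroup $(e^{tA_\delta})_{t\ge 0}$ is self-adjoint, so each factor $e^{tA_\delta}$ may be moved from the first to the second argument of the inner product; the interchange of the Bochner integral with the pairing against the fixed vector $\Phi_T$ is legitimate since $\langle\cdot,\Phi_T\rangle_{\mathbb X^2(\bOm)}$ is a bounded linear functional. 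Recalling from Proposition \ref{ex-sol}(iii) that the adjoint solution is exactly $\Phi(\cdot,t)=e^{(T-t)A_\delta}\Phi_T$, this yields
\[
\langle U(\cdot,T),\Phi_T\rangle_{\mathbb X^2(\bOm)}=\langle U_0,\Phi(\cdot,0)\rangle_{\mathbb X^2(\bOm)}+\int_0^T\langle F(\cdot,s),\Phi(\cdot,s)\rangle_{\mathbb X^2(\bOm)}\,ds.
\]

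It then remains to unpack the $\mathbb X^2(\bOm)$ inner products using their explicit form. Since $F=(0,g)$ and $\Phi=(\phi,\phi_\Gamma)$, the volume part of $\langle F,\Phi\rangle_{\mathbb X^2(\bOm)}$ vanishes and $\langle F(\cdot,s),\Phi(\cdot,s)\rangle_{\mathbb X^2(\bOm)}=\int_\Gamma g(x,s)\phi_\Gamma(x,s)\,d\sigma$, while $\langle U_0,\Phi(\cdot,0)\rangle_{\mathbb X^2(\bOm)}=\int_\Omega u_0\,\phi(\cdot,0)\,dx+\int_\Gamma u_{\Gamma,0}\,\phi_\Gamma(\cdot,0)\,d\sigma$. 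Setting the pairing equal to zero and transposing the boundary integral to the left-hand side reproduces \eqref{CFT} verbatim, which proves (i)$\Leftrightarrow$(ii). I do not expect a genuine obstacle here: the self-adjointness in Proposition \ref{pro-sg} makes the adjoint semigroup coincide with $(e^{tA_\delta})_{t\ge 0}$ and bypasses any delicate integration by parts in space-time; the only point requiring a word of care is that the manipulations take place entirely at the level of $\mathbb X^2(\bOm)$, the passage of the semigroup across the inner product being valid without any additional regularity of $U_0$, $\Phi_T$ or $g$, so that no density or approximation step beyond the estimate \eqref{est-msol} is actually needed.
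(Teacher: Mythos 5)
Your proof is correct, but it follows a genuinely different route from the paper. The paper works concretely: it multiplies the bulk equation by $\phi$ and the boundary equation by $\phi_\Gamma$, integrates by parts in space and time to obtain the identity \eqref{IE} for \emph{strong} solutions (with $(\phi_T,\phi_{\Gamma,T})\in\mathbb W_\delta^{1,2}(\bOm)$), and only then extends to mild solutions and arbitrary $\mathbb X^2(\bOm)$ data by an approximation argument. You instead pair $U(\cdot,T)$ from the Duhamel formula \eqref{mild-sol} directly against $\Phi_T$ and move each factor $e^{tA_\delta}$ across the inner product using the self-adjointness of $A_\delta$ recorded in \eqref{op-A}; since $\Phi(\cdot,t)=e^{(T-t)A_\delta}\Phi_T$, this yields \eqref{CFT} immediately at the level of mild solutions, with no regularity upgrade or density step. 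Your approach buys brevity and robustness (no integration by parts, no approximation), and it is fully justified here because $A_\delta$ is self-adjoint so the adjoint semigroup coincides with $(e^{tA_\delta})_{t\ge0}$. What the paper's computation buys in exchange is that it actually \emph{derives} the form of the adjoint system \eqref{heat_dbc_adj} — showing how the boundary terms produced by Green's formula are absorbed by the dynamic boundary condition — and it would survive in a non-self-adjoint setting (e.g.\ with a drift or a non-symmetric form), where your identification of the backward solution with the forward semigroup applied to $\Phi_T$ would break down. Your one claim that deserves a word of justification, namely that no approximation is needed, is indeed correct: the interchange of the Bochner integral with the bounded functional $\langle\cdot,\Phi_T\rangle_{\mathbb X^2(\bOm)}$ and the self-adjointness of each $e^{tA_\delta}$ require nothing beyond $U_0,\Phi_T\in\mathbb X^2(\bOm)$ and $g\in L^2(\Sigma_T)$.
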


\begin{proof}
Let $g\in L^2(\Sigma_T)$ be arbitrary and $u$ the solution of \eqref{heat_dbc}. Let $(\phi_T,\phi_{\Gamma,T})\in \mathbb W_\delta^{1,2}(\bOm)$ and $\phi$ the strong solution of \eqref{heat_dbc_adj}. Multiplying the first equation in \eqref{heat_dbc} by $\phi$ and integrating by parts we get that
\begin{align}\label{IT}
0=&\int_0^T\int_{\Omega}\Big(\partial_tu-\gamma\Delta u\Big)\phi\;dxdt=\int_{\Omega}\Big(u(x,T)\phi_T(x)-u_0(x)\phi(x,0)\Big)\;dx-\int_0^T\int_{\Omega}u\partial_t\phi\;dxdt\notag\\
&-\int_0^T\int_{\Omega}\gamma u\Delta\phi\;dxdt-\gamma\int_0^T\int_{\Gamma}\partial_{\nu}u\phi\;d\sigma dt+\gamma\int_0^T\int_{\Gamma}\partial_{\nu}\phi u\;d\sigma dt.
\end{align}
Multiplying the second equation in \eqref{heat_dbc} by  $\phi_\Gamma$ and integrating by parts we get that
\begin{align}\label{BI}
\int_0^T\int_{\Gamma}g(x,t)\phi_\Gamma(x,t)\;d\sigma dt=&\int_{\Gamma}\Big(u_\Gamma(x,T)\phi_{\Gamma,T}(x)-u_\Gamma(x,0)\phi_\Gamma(x,0)\Big)\;d\sigma-\int_0^T\int_{\Gamma}u_\Gamma\partial_t\phi_\Gamma\;d\sigma dt\notag\\
&-\int_0^T\int_\Gamma u_\Gamma\delta\Delta_\Gamma\phi_\Gamma\;d\sigma dt+\int_0^T\int_{\Gamma}\Big(\gamma\partial_\nu u+\beta u_\Gamma\Big)\phi_\Gamma\;d\sigma dt.
\end{align}
Adding \eqref{IT} and \eqref{BI} and using that $\phi$ is the solution of \eqref{heat_dbc_adj} we get that
\begin{align}\label{IE}
\int_0^T\int_{\Gamma}g(x,t)\phi_\Gamma(x,t)\;d\sigma dt=&\int_{\Omega}\Big(u(x,T)\phi_T(x)-u_0(x)\phi(x,0)\Big)\;dx\notag\\
&+\int_{\Gamma}\Big(u_\Gamma(x,T)\phi_{\Gamma,T}(x)-u_{\Gamma,0}(x)\phi_\Gamma(x,0)\Big)\;d\sigma\notag\\
=&-\int_{\Omega}u_0(x)\phi(x,0)-\int_{\Gamma}u_{\Gamma,0}(x)\phi_\Gamma(x,0)\;d\sigma\notag\\
&+\int_{\Omega}u(x,T)\phi_T(x)\;dx+\int_{\Gamma}u_{\Gamma}(x,T)\phi_{\Gamma,T}(x)\;d\sigma.
\end{align}
By approximation, we have that the identity \eqref{IE} also holds for every $(\phi_T,\phi_{\Gamma,T})\in\mathbb X^2(\bOm)$ and $\phi$ the mild solution of \eqref{heat_dbc_adj}.

Now assume that the system \eqref{heat_dbc} is null controllable in time $T>0$. Then by definition, $u(\cdot,T)=0$ in $\Omega$ and $u_\Gamma(\cdot,T)=0$ on $\Gamma$. Hence, it follows from \eqref{IE} that the identity \eqref{CFT} holds and we have shown that (i) implies (ii).  
Finally assume that \eqref{CFT} holds. Then it  follows from \eqref{IE} again that
\begin{align*}
\int_{\Omega}u(x,T)\phi_T(x)\;dx+\int_{\Gamma}u_{\Gamma}(x,T)\phi_{\Gamma,T}(x)\;d\sigma=\langle (u(\cdot,T),u_{\Gamma}(\cdot,T));(\phi_T,\phi_{\Gamma,T})\rangle_{\mathbb X^2(\bOm)}=0
\end{align*}
for every $(\phi_T,\phi_{\Gamma,T})\in\mathbb X^2(\bOm)$. Hence, $u(\cdot,T)=0$ in $\Omega$, $u_\Gamma(\cdot,T)=0$ on $\Gamma$ and we have shown that the system \eqref{heat_dbc} is null controllable in time $T>0$. The proof is finished.
\end{proof}

The following theorem is the main result of the paper.

\begin{theorem}\label{dbc_control_thm}
For every $T>0$ and $(u_0,u_{\Gamma,0})\in\mathbb X^2(\bOm)$, there exists a control function $g\in L^2(\Sigma_T)$ such that the unique mild solution $u$ of \eqref{heat_dbc} satisfies $u(\cdot,T)=0$ in $\Omega$ and $u_\Gamma(\cdot,T)=0$ on $\Gamma$. 
\end{theorem}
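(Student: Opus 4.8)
The plan is to reduce the controllability statement to an observability inequality for the adjoint system \eqref{heat_dbc_adj} and then to establish that inequality by means of the Carleman estimate announced in Theorem \ref{carleman_thm}. Writing $\Phi=(\phi,\phi_\Gamma)$ for the mild solution of \eqref{heat_dbc_adj} with final datum $\Phi_T=(\phi_T,\phi_{\Gamma,T})\in\mathbb X^2(\bOm)$, the target inequality is
\begin{align}\label{obs}
\|\phi(\cdot,0)\|_{L^2(\Omega)}^2+\|\phi_\Gamma(\cdot,0)\|_{L^2(\Gamma)}^2\le C\int_0^T\int_{\Gamma}|\phi_\Gamma(x,t)|^2\;d\sigma dt.
\end{align}
Granting \eqref{obs}, null controllability follows from the Hilbert Uniqueness Method. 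One minimizes over $\Phi_T\in\mathbb X^2(\bOm)$ the strictly convex and continuous functional
\begin{align*}
J(\Phi_T)=\frac12\int_0^T\int_{\Gamma}|\phi_\Gamma|^2\;d\sigma dt+\int_{\Omega}u_0\,\phi(\cdot,0)\;dx+\int_{\Gamma}u_{\Gamma,0}\,\phi_\Gamma(\cdot,0)\;d\sigma,
\end{align*}
whose coercivity (in the norm induced by the observation term, after completion) is precisely \eqref{obs}; the Euler--Lagrange equation at the unique minimizer $\widehat\Phi_T$ reproduces the identity \eqref{CFT} with $g=\widehat\phi_\Gamma$, and the preceding Proposition then yields $u(\cdot,T)=0$ in $\Omega$ and $u_\Gamma(\cdot,T)=0$ on $\Gamma$.

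The heart of the argument is thus \eqref{obs}, which I would obtain from a global Carleman estimate for $\phi$. I would introduce Fursikov--Imanuvilov type weights $\varphi(x,t)$ and $\alpha(x,t)$, singular as $t\to0^+$ and $t\to T^-$ and built from a function whose gradient is suitably nondegenerate, multiply the bulk equation in \eqref{heat_dbc_adj} by appropriately weighted test functions, and integrate by parts over $\Omega_T$ and $\Sigma_T$. The interior computation is the classical one and produces, after absorbing lower order terms for the parameters $s$ and $\lambda$ large enough, weighted $L^2$ control of $\phi$, $\nabla\phi$ and $\partial_t\phi$ in terms of a boundary contribution carrying $\partial_\nu\phi$. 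The surface equation in \eqref{heat_dbc_adj} is then used to trade this normal derivative for an observation of $\phi_\Gamma$: when $\delta=0$ it reads $\gamma\partial_\nu\phi=\partial_t\phi_\Gamma-\beta\phi_\Gamma$, whereas for $\delta>0$ one must in addition absorb the surface diffusion term $\delta\LB\phi_\Gamma$, which I would treat by means of the surface divergence identity \eqref{surf_div_thm} together with the interpolation inequality \eqref{GB_interpol} so as to keep the tangential derivatives under control.

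The passage from the Carleman estimate to \eqref{obs} is then standard: restricting to a subinterval $[T/4,3T/4]$ on which the weights $\varphi$ and $e^{-2s\alpha}$ are bounded from above and below controls $\|\Phi(\cdot,t)\|_{\mathbb X^2(\bOm)}$ for $t$ in that interval by the right-hand side of \eqref{obs}, and the dissipativity of the semigroup $(e^{tA_\delta})_{t\ge0}$ provided by Proposition \ref{pro-sg} propagates this bound back to $t=0$, giving \eqref{obs}.

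The main obstacle is the Carleman estimate itself, and specifically the treatment of the boundary. Unlike the pure heat equation, here $\Gamma$ carries its own (parabolic when $\delta>0$) dynamics, so the integration by parts generates, besides the usual $\partial_\nu\phi$ term, the genuinely boundary contributions $\partial_t\phi_\Gamma$ and $\delta\LB\phi_\Gamma$. The weights must be chosen so that these either have a favourable sign on $\Gamma$ or can be absorbed, and this has to be achieved under the mere Lipschitz regularity of $\Gamma$ and uniformly in the two regimes $\delta>0$ and $\delta=0$. Converting the resulting normal-derivative observation into the $L^2(\Sigma_T)$ observation of $\phi_\Gamma$ demanded by the cost identity \eqref{CFT} is the second delicate point, as it genuinely couples the interior and surface estimates.
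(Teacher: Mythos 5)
Your proposal is correct and follows essentially the same route as the paper: a Fursikov--Imanuvilov-type Carleman estimate for the adjoint system (Theorem \ref{carleman_thm}), converted into the observability inequality of Proposition \ref{prop-22} by restricting to $[T/4,3T/4]$ and propagating the energy back to $t=0$ via the dissipativity of the semigroup, followed by a duality argument built on the identity \eqref{CFT}. The only inessential deviation is the final step, where you produce the control by HUM minimization while the paper invokes the range-inclusion criterion $\mathrm{Im}(\mathcal{S})\subset\mathrm{Im}(\mathcal{T})$ from Zabczyk; note also that the paper's observation term is $\int_{\Sigma_T}|\beta\phi_\Gamma|^2\,d\sigma dt$, which is equivalent to your $\int_{\Sigma_T}|\phi_\Gamma|^2\,d\sigma dt$ under the standing assumption \eqref{beta} that $\beta\ge\beta_0>0$.
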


\section{Proof of the main result}

In this section we give the proof of Theorem \ref{dbc_control_thm}.
To do this we need some important intermediate results. We start with the  so called Carleman estimate.

\subsection{The Carleman  estimates}

The following theorem gives a suitable Carleman type estimate for solutions of the backward system \eqref{heat_dbc_adj}.
It is one of the main tool needed in the proof of the main result. The proof of the Carleman estimates given here uses some ideas of the corresponding result in the case of the interior null controllability of the system studied in \cite{MS2013}. The weight functions used are the same as the ones in \cite{FG} for the Dirichlet boundary condition.

\begin{theorem}\label{carleman_thm}
Let $T>0$ and $m>1$ be real numbers. Given a positive parameter $\lambda$, we define the weight function $\alpha$ on $\bOm\times(0,T)$ by  
\begin{align}\label{weight_alpha}
\alpha(x,t) = \theta(t)p(x) := \frac{1}{t(T-t)}\left(e^{2\lambda m\norm{\eta}{\infty}}-e^{\lambda\left(m\norm{\eta}{\infty}+\eta(x)\right)}\right),
\end{align}
where $\eta\in C^2(\overline{\Omega})$ is  such that $\eta>0$ in $\Omega$ and $\eta=0$ on $\Gamma$. 
Then, there exists a constant $C>0$ and $\lambda_0,R_0>1$ such that for all $\lambda>\lambda_0$ and $R>R_0$ the strong solution $\phi$ of the backward system \eqref{heat_dbc_adj} satisfies  
\begin{align}\label{carleman}
& \lambda^3R^2   \qint \theta^3\xi^3e^{-2R\alpha}\phi^2\,dxdt + \lambda \qint \theta\xi e^{-2R\alpha}|\nabla \phi|^2\,dxdt \notag\\
&+ \lambda^2 R^2 \sint\theta^3\xi^3e^{-2R\alpha}\phi_{\Gamma}^2\,d\sigma dt \leq C\int_{\Sigma_T} \theta\xi e^{-2R\alpha}\left|\partial_t\phi_{\Gamma} + \delta\LB \phi_\Gamma - \gamma\partial_{\nu}\phi\right|^2\,d\sigma dt,
\end{align}
where, for simplicity of the notation, we have set
\begin{align}\label{weight_xi}
	\xi(x) := e^{\lambda\left(m\norm{\eta}{\infty}+\eta(x)\right)}.
\end{align} 
\end{theorem}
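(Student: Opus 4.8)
The plan is to follow the classical Fursikov--Imanuvilov strategy of conjugation and operator splitting, adapted to the bulk--surface coupling of \eqref{heat_dbc_adj}; since the estimate is asserted for the strong solution $\phi$, every integration by parts below is justified, and the general case would follow afterwards by density (Proposition \ref{ex-sol}). First I would introduce the conjugated variable $\psi := e^{-R\alpha}\phi$ on $\bOm\times(0,T)$, together with its trace $\psi_\Gamma = e^{-R\alpha}\phi_\Gamma$, and rewrite the bulk equation $\partial_t\phi+\gamma\Delta\phi=0$ as $M_s\psi+M_a\psi=0$, where $P_R:=e^{-R\alpha}(\partial_t+\gamma\Delta)e^{R\alpha}$ is split into a formally self-adjoint part $M_s\psi=\gamma\Delta\psi+\gamma R^2|\nabla\alpha|^2\psi+R(\partial_t\alpha)\psi$ and a formally skew-adjoint part $M_a\psi=\partial_t\psi+2\gamma R\,\nabla\alpha\cdot\nabla\psi+\gamma R(\Delta\alpha)\psi$. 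Using $\nabla\alpha=\theta\nabla p=-\lambda\theta\xi\nabla\eta$, so that $|\nabla\alpha|^2=\lambda^2\theta^2\xi^2|\nabla\eta|^2$ with $\xi$ as in \eqref{weight_xi}, makes the $\lambda$- and $R$-dependence of every coefficient explicit.

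The core identity comes from taking the $L^2(\Omega_T)$ norm of $M_s\psi+M_a\psi=0$:
\[
\|M_s\psi\|_{L^2(\Omega_T)}^2+\|M_a\psi\|_{L^2(\Omega_T)}^2+2\,(M_s\psi,M_a\psi)_{L^2(\Omega_T)}=0,
\]
so the whole game is to integrate the cross product $2(M_s\psi,M_a\psi)$ by parts and show it is bounded below by the desired interior Carleman terms up to controllable boundary integrals. Pairing $\gamma R^2|\nabla\alpha|^2\psi$ with $2\gamma R\,\nabla\alpha\cdot\nabla\psi$ produces the leading interior term $\sim R^3\qint \mathrm{div}(|\nabla\alpha|^2\nabla\alpha)\,\psi^2$, whose principal part in $\lambda$ reproduces, after returning to $\phi=e^{R\alpha}\psi$, the two interior contributions $\lambda^3R^2\qint\theta^3\xi^3e^{-2R\alpha}\phi^2$ and $\lambda\qint\theta\xi e^{-2R\alpha}|\nabla\phi|^2$ on the left of \eqref{carleman}. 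The time-boundary terms at $t=0,T$ vanish because $m>1$ forces $p>0$ on $\bOm$, hence $\alpha=\theta p\to+\infty$ and every weight $\theta^k\xi^k e^{-2R\alpha}\to 0$ there. Genuinely lower-order interior terms, and the degeneracy of $|\nabla\eta|$ at the interior maximum of $\eta$ (which is unavoidable, since $\eta$ attains an interior maximum), are handled by the weight construction and localization of \cite{FG,MS2013} and absorbed by fixing $\lambda>\lambda_0$, $R>R_0$ large.

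The main obstacle, and the feature distinguishing this estimate from the interior case, is the surface. The spatial integration by parts leaves boundary integrals on $\Sigma_T$ involving $\partial_\nu\psi$, the tangential gradient $\GB\psi_\Gamma$, $\psi_\Gamma$ and $\partial_t\psi_\Gamma$. Here the hypothesis $\eta=0$ on $\Gamma$ is decisive: it forces $\GB\eta=0$, hence $\nabla\eta=(\partial_\nu\eta)\nu$ on $\Gamma$, so that $\nabla\alpha\cdot\nu=-\lambda\theta\xi\,\partial_\nu\eta$ has a fixed (nonnegative) sign, recalling that $\partial_\nu\eta\le 0$ since $\eta>0$ inside vanishes on $\Gamma$. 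These bulk boundary integrals must be matched against a second, surface Carleman computation carried out on the conjugated boundary equation, whose principal part is the surface operator $\partial_t+\delta\LB$. The purpose of this step is twofold: to generate the surface term $\lambda^2R^2\sint\theta^3\xi^3e^{-2R\alpha}\phi_\Gamma^2$ on the left of \eqref{carleman}, and to reorganise all remaining coupling terms, in particular those carrying $\gamma\partial_\nu\phi$, into the single surface quantity $\partial_t\phi_\Gamma+\delta\LB\phi_\Gamma-\gamma\partial_\nu\phi$ on the right. Making the bulk boundary integrals and the surface Carleman terms cancel exactly, without leaving an uncontrolled trace, is the delicate point, and it is where the sign of $\partial_\nu\eta$ and the coercivity built into the dynamic boundary condition are used.

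Finally I would collect the interior and surface positive terms, undo the conjugation via $\phi=e^{R\alpha}\psi$, and absorb every remainder by taking $\lambda>\lambda_0$, $R>R_0$ large, yielding \eqref{carleman} with $C$ independent of $\lambda$, $R$ and the data. The interpolation inequality \eqref{GB_interpol} would be invoked to control the intermediate surface norm $\|\psi_\Gamma\|_{W^{1,2}(\Gamma)}$ coming from the $\delta\LB$ term by the $L^2(\Gamma)$ and $D(\LB)$ norms, so that no net first-order surface quantity survives on the right. I expect the surface/coupling step to be the principal difficulty; the bulk computation and the time-endpoint vanishing are standard, and the passage from strong to general solutions is a routine density argument.
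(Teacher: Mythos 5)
Your proposal follows essentially the same route as the paper's proof: conjugation $z=e^{-R\alpha}\phi$, a Fursikov--Imanuvilov splitting of the conjugated bulk operator, expansion of the cross product, a parallel treatment of the conjugated dynamic boundary equation whose cross terms are matched against the bulk boundary integrals using $\nabla_\Gamma\eta=0$ and the sign of $\partial_\nu\eta$, the interpolation inequality \eqref{GB_interpol} to absorb the $\delta\Delta_\Gamma$ coupling, and reassembly of the right-hand side into $|\partial_t\phi_\Gamma+\delta\Delta_\Gamma\phi_\Gamma-\gamma\partial_\nu\phi|^2$ before taking $\lambda,R$ large. The only cosmetic difference is that the paper runs the bulk and surface ``square the sum'' identities simultaneously in a single inequality rather than as two separate Carleman computations, and it is worth noting that your explicit flag about the degeneracy of $|\nabla\eta|$ at the interior maximum of $\eta$ is a point the paper's condition \eqref{lambda} passes over in silence.
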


\begin{proof} 
We give the proof in several steps. Throughout the proof $C$ will denote a generic constant which is independent of $\lambda$, $R$ and $\phi$. This constant may vary even from line to line.\\

\noindent
{\bf Step 1. An auxiliary problem.} For any strong solution $\phi$ of the adjoint system \eqref{heat_dbc_adj} and for any fixed $R>0$, we define
\begin{align}\label{zeta}
	z(x,t):=\phi(x,t)e^{-R\alpha(x,t)}.
\end{align}
We notice that
\begin{equation}\label{bc time}
\begin{cases}
	z(\cdot,0):=\lim_{t\to 0}z(\cdot,t)=0=z(\cdot,T):=\lim_{t\to T}z(\cdot,t)\;\;&\mbox{ in }\;\Omega\\
	z_\Gamma(\cdot,0):=\lim_{t\to 0}z_\Gamma(\cdot,t)=0=z_\Gamma(\cdot,T):=\lim_{t\to T}z_\Gamma(\cdot,t)\;\;&\mbox{ on }\;\Gamma
	\end{cases}
\end{equation}
The parameter $R$ is meant to be large. Plugging the function $\phi(x,t)=z(x,t)e^{R\alpha(x,t)}$ in the system \eqref{heat_dbc_adj} and using \eqref{bc time}, we obtain that $z$ satisfies the following system

\begin{align}\label{heat_dbc_z}
\begin{cases}
	\partial_tz + \gamma\Delta z + 2R\gamma\nabla\alpha\cdot\nabla z + (R\gamma \Delta\alpha + R\alpha_t) z + R^2|\nabla\alpha|^2 z=0 \;\;&\mbox{ in }\;  \Omega_T\\
\partial_t	z_\Gamma + \delta\LB z_\Gamma - \gamma\partial_{\nu}z -\left(\beta-R\alpha_t+R\gamma\partial_{\nu}\alpha\right)z_\Gamma = 0 &\mbox{ on }\;\Sigma_T\\
	z(\cdot,0)=z(\cdot,T)=0 &\mbox{ in }\;\Omega\\
		z_\Gamma(\cdot,0)=z_\Gamma(\cdot,T)=0 &\mbox{ on }\;\Gamma
\end{cases}
\end{align}
where $\alpha_t:=\partial_t\alpha$.
Next, expanding the spatial derivatives of $\alpha$ by using the chain rule, we obtain 
\begin{align*}
	\nabla\alpha = -\lambda\theta\xi\nabla\eta \;\mbox{ and }\;\;	\Delta\alpha =  -\lambda\theta\xi\Delta\eta - \lambda^2\theta\xi|\nabla\eta|^2, 
\end{align*}
where we recall that we have used the notation \eqref{weight_xi}.
Using the above expressions, the system \eqref{heat_dbc_z} can be rewritten as 
\begin{align}\label{eq_abbr}
\begin{cases}
	M_1 + M_2 = f\;\;&\textrm{ in } \Omega_T\\
	 N_1 + N_2 = h\;\;&\textrm{ on }\Sigma_T
\end{cases}
\end{align}
where
\begin{align*}
\begin{cases}
	 M_1& := -2\lambda^2R\gamma\theta\xi|\nabla\eta|^2z - 2\lambda R\gamma\theta\xi\left(\nabla\eta\cdot\nabla z\right) + \partial_tz := M_{1,1}+M_{1,2}+M_{1,3}\\
M_2 &:= \lambda^2R^2\gamma\theta^2\xi^2|\nabla\eta|^2z + \gamma\Delta z + R\alpha_tz := M_{2,1}+M_{2,2}+M_{2,3}\\
N_1& := \partial_tz_\Gamma + \lambda R\gamma\theta\xi\partial_{\nu}\eta z_\Gamma := N_{1,1}+N_{1,2}\\
N_2 &:= \delta\LB z_\Gamma + R\alpha_t z_\Gamma - \gamma\partial_{\nu}z: = N_{2,1}+N_{2,2}+N_{2,3}\\
f &:= \lambda R\gamma\theta\xi\Delta\eta z  - \lambda^2R\gamma\theta\xi|\nabla\eta|^2 z\\
 h &:= \beta z_\Gamma.
	\end{cases}
\end{align*}
Applying the respective $L^2$-norms to the terms in \eqref{eq_abbr} we get that
\begin{align}\label{eq_norm}
	\norm{f}{L^2(\Omega_T)}^2 + \norm{h}{L^2(\Sigma_T)}^2 =& \norm{M_1}{L^2(\Omega_T)}^2 + \norm{M_2}{L^2(\Omega_T)}^2 + \norm{N_1}{L^2(\Sigma_T)}^2 + \norm{N_2}{L^2(\Sigma_T)}^2 \notag\\
	&+ 2\langle M_1,M_2\rangle_{L^2(\Omega_T)} + 2\langle N_1,N_2\rangle_{L^2(\Sigma_T)},
\end{align}
which clearly implies that
\begin{align}\label{ineq_norm}
	2\langle M_1, & M_2\rangle_{L^2(\Omega_T)} - \norm{f}{L^2(\Omega_T)}^2 + 2\langle N_1,N_2\rangle_{L^2(\Sigma_T)} - \norm{h}{L^2(\Sigma_T)}^2 \notag\\ 
=& -\sum_{i=1}^2\left(\norm{M_i}{L^2(\Omega_T)}^2 + \norm{N_i}{L^2(\Sigma_T)}^2\right)\leq 0.
\end{align}

\noindent
{\bf Step 2. Estimate from below of the terms defined on $\Omega_T$ in \eqref{ineq_norm}}.
We compute and estimate from below the scalar product $\langle M_1,M_2\rangle_{L^2(\Omega_T)}-\|f\|_{L^2(\Omega_T)}^2$.

 {\bf Step 2.1. Estimate from below of $\langle M_1,M_{2,1}\rangle_{L^2(\Omega_T)}$}. Calculating we  have that
\begin{align}\label{M1}
	\langle M_{1,1},M_{2,1}\rangle_{L^2(\Omega_T)} &= -2\lambda^4R^3\gamma^2 \qint \theta^3\xi^3|\nabla\eta|^4 z^2\,dxdt.
\end{align}
A simple calculation and an integrating by parts yield
\begin{align}\label{M2}
	\langle M_{1,2}, & M_{2,1}\rangle_{L^2(\Omega_T)} \notag\\
=& -\lambda^3R^3\gamma^2 \qint \theta^3\xi^3|\nabla\eta|^2\left(\nabla\eta\cdot\nabla (z^2)\right)\,dxdt \notag\\
	= &- \lambda^3R^3\gamma^2  \int_{\Sigma_T}  \theta^3\xi^3|\nabla\eta|^2\partial_{\nu}\eta z^2\,d\sigma dt + \lambda^3R^3\gamma^2 \qint \theta^3\textrm{div}(\xi^3|\nabla\eta|^2\nabla\eta)z^2\,dxdt\notag\\
	=& \mathbf{BT}_1 + 3\lambda^4R^3\gamma^2 \qint \theta^3\xi^3|\nabla\eta|^4 z^2\,dxdt + \lambda^3R^3\gamma^2 \qint \theta^3\xi^3\Delta\eta|\nabla\eta|^2 z^2\,dxdt \notag\\
	&+ \lambda^3R^3\gamma^2 \qint \theta^3\xi^3\left(\nabla\eta\cdot\nabla(|\nabla\eta|^2)\right) z^2\,dxdt,
\end{align}
where we have set the boundary term
\begin{align*}
 \mathbf{BT}_1 :=- \lambda^3R^3\gamma^2 \int_{\Sigma_T} \theta^3\xi^3|\nabla\eta|^2\partial_{\nu}\eta z^2\,d\sigma dt.
\end{align*}
Adding \eqref{M1} and \eqref{M2} and using that 
\begin{align*}
\Delta\eta\ge -|\Delta\eta|\;\mbox{ and }\;\nabla \eta\cdot\nabla(|\nabla\eta|^2)\ge -|\nabla \eta||\nabla(|\nabla\eta|^2)|,
\end{align*}
we get that 
\begin{align}\label{M11+12-1}
	&\langle M_{1,1}+M_{1,2},M_{2,1}\rangle_{L^2(\Omega_T)}\notag\\
	 \geq &\mathbf{BT}_1 + \lambda^4R^3\gamma^2 \qint \theta^3\xi^3\left(|\nabla\eta|^4 - \frac{1}{\lambda}|\Delta\eta|\,|\nabla\eta|^2\right) z^2\,dxdt\notag\\
	&- \lambda^3R^3\gamma^2 \qint \theta^3\xi^3|\nabla\eta||\nabla(|\nabla\eta|^2)| z^2\,dxdt\notag\\
=&\mathbf{BT}_1+ \lambda^4R^3\gamma^2 \qint \theta^3\xi^3\left(|\nabla\eta|^4 - \frac{1}{\lambda}|\Delta\eta|\,|\nabla\eta|^2- \frac 1\lambda|\nabla\eta||\nabla(|\nabla\eta|^2)|\right) z^2\,dxdt.
\end{align}
Let
\begin{align}\label{lambda}
	\lambda\geq\max\left\{\frac{2|\Delta\eta|}{|\nabla\eta|^2},\frac{4|\nabla(|\nabla\eta|^2)|}{|\nabla\eta|^3}\right\}.
\end{align}
Then
\begin{align}\label{eta-2}
|\nabla\eta|^4 - \frac{1}{\lambda}|\Delta\eta|\,|\nabla\eta|^2\geq \frac 12|\nabla\eta|^4\;\;\;\mbox{ and }\;\;\;\frac 12|\nabla\eta|^4 - \frac 1\lambda|\nabla\eta||\nabla(|\nabla\eta|^2)|\geq \frac 14|\nabla \eta|^4.
\end{align}
It follows from \eqref{eta-2} that if $\lambda$ satisfies \eqref{lambda}, then
\begin{align}\label{eta}
|\nabla\eta|^4 - \frac{1}{\lambda}|\Delta\eta|\,|\nabla\eta|^2- \frac 1\lambda|\nabla\eta||\nabla(|\nabla\eta|^2)|\geq \frac 14|\nabla \eta|^4.
\end{align}
Using \eqref{eta} we get from \eqref{M11+12-1} that there exists a constant $C>0$ (depending only on $\eta$ and $\gamma$) such that if $\lambda$ satisfies \eqref{lambda}, then
\begin{align}\label{M11+12}
	\langle M_{1,1}+M_{1,2},M_{2,1}\rangle_{L^2(\Omega_T)} \geq  \mathbf{BT}_1 + \lambda^4R^3C \qint \theta^3\xi^3 z^2\,dxdt.
\end{align}
Moreover, we have that there is a constant $C>0$ (depending only on $\eta$ and $\gamma$) such that
\begin{align}\label{M13-21}
	\langle M_{1,3},M_{2,1}\rangle_{L^2(\Omega_T)} &= -\lambda^2R^2\gamma \qint \theta\theta_t\xi^2|\nabla\eta|^2z^2\,dxdt \geq -\lambda^2R^2C \qint \theta^3\xi^3z^2\,dxdt,
\end{align}
where we have used the fact that there is a constant $C>0$ such that $|\theta_t|\le C\theta^2$. It follows from \eqref{M11+12} and \eqref{M13-21} that if $\lambda$ satisfies \eqref{lambda}, then
\begin{align}\label{M1-21-1}
\langle M_1,M_{2,1}\rangle_{L^2(\Omega_T)} \ge \mathbf{BT}_1 + \lambda^4R^3C \qint \theta^3\xi^3\Big(1-\frac{1}{R\lambda^2}\Big) z^2\,dxdt.
\end{align}
If $\lambda^2\ge \frac{2}{R}$, then $\left(1-\frac{1}{R\lambda^2}\right)\ge\frac 12$. Thus it follows from \eqref{M1-21-1} that there exists a constant $C>0$ such that if
\begin{align*}
	\lambda\geq\max\left\{\frac{2|\Delta\eta|}{|\nabla\eta|^2},\frac{4|\nabla(|\nabla\eta|)|}{|\nabla\eta|^3},\sqrt{\frac{2}{R}}\;\right\},
\end{align*}
then
\begin{align}\label{M1-21}
	\langle M_1,M_{2,1}\rangle_{L^2(\Omega_T)} &\geq \mathbf{BT}_1 + \lambda^4R^3C \qint \theta^3\xi^3 z^2\,dxdt.
\end{align}

{\bf Step 2.2. Estimate from below of $\langle M_1,M_{2,2}\rangle_{L^2(\Omega_T)}$}. Integrating by parts and using the fact that
\begin{align}\label{e-eta}
\nabla \xi=\lambda\xi\nabla\eta,
\end{align}
we  get that
\begin{align}\label{M11}
	\langle M_{1,1},M_{2,2}\rangle_{L^2(\Omega_T)} =& -2\lambda^2R\gamma^2 \sint \theta\xi|\nabla\eta|^2 z\partial_{\nu}z\,d\sigma dt \notag\\
	&+ 2\lambda^2R\gamma^2 \qint \theta\xi|\nabla\eta|^2|\nabla z|^2\,dxdt\notag
	\\
	&+ 2\lambda^2R\gamma^2 \qint \theta\xi\left(\nabla(|\nabla\eta|^2)\cdot\nabla z\right)z\,dxdt  \notag\\
	&+ 2\lambda^3R\gamma^2 \qint \theta\xi|\nabla\eta|^2\left(\nabla\eta\cdot\nabla z\right)z\,dxdt\notag
	\\
	= :&\bt{2} + \dt{1} + \dt{2} + \dt{3}.
\end{align}
Now applying the Young inequality, we get that there exists a constant $C>0$ (depending only on $\eta$ and $\gamma$) such that
\begin{align}\label{dt2}
	|\dt{2}| =&\left|2\lambda^2R\gamma^2 \qint \theta\xi\left(\nabla(|\nabla\eta|^2)\cdot\nabla z\right)z\,dxdt\right|\notag\\
	&\leq \lambda^4RC \qint \theta\xi z^2\,dxdt + RC\qint \theta\xi|\nabla z|^2\,dxdt
	\end{align}
and
\begin{align}\label{dt3}
	|\dt{3}| =&\left|2\lambda^3R\gamma^2 \qint \theta\xi|\nabla\eta|^2\left(\nabla\eta\cdot\nabla z\right)z\,dxdt\right|\notag\\
	&\leq \lambda^4R^2C \qint\theta^2\xi^2z^2\,dxdt + \lambda^2C \qint |\nabla z|^2\,dxdt.
\end{align}
Using \eqref{dt2} and \eqref{dt3}, we get from \eqref{M11} that there exists a constant $C>0$ such that
\begin{align}\label{M11-1}
	\langle M_{1,1},M_{2,2}\rangle_{L^2(\Omega_T)} \geq& \dt{1} - \lambda^4R^2C \qint \left(1+\frac{1}{\lambda^2R\theta\xi}\right)\theta^2\xi^2z^2\,dxdt\notag\\
	& - C \qint \left(R\theta\xi+\lambda^2\right)|\nabla z|^2\,dxdt + \bt{2}.
\end{align}
Furthermore, we have that
\begin{align}\label{M11-2}
	\langle M_{1,2},&M_{2,2}\rangle_{L^2(\Omega_T)} 
	\notag\\ 
	=& -2\lambda R\gamma^2 \sint \theta\xi\left(\nabla\eta\cdot\nabla z\right)\partial_{\nu}z\,d\sigma dt + 2\lambda R\gamma^2 \qint \theta\nabla\left(\xi(\nabla\eta\cdot\nabla z)\right)\cdot\nabla z\,dxdt
\notag	\\
	= &-2\lambda R\gamma^2 \sint \theta\xi\partial_{\nu}\eta|\partial_{\nu}z|^2\,d\sigma dt + 2\lambda R\gamma^2 \qint \theta\xi\nabla(\nabla\eta\cdot\nabla z)\cdot\nabla z\,dxdt 
\notag	\\
	&+ 2\lambda R\gamma^2 \qint \theta(\nabla\eta\cdot\nabla z)(\nabla\eta\cdot\nabla z)\,dxdt
\notag	\\
	= &\bt{3} + 2\lambda R\gamma^2 \qint \theta\xi D^2\eta(\nabla z,\nabla z)\,dxdt + \lambda R\gamma^2 \qint \theta\xi\left(\nabla\eta\cdot\nabla(|\nabla z|^2)\right)\,dxdt
\notag	\\
	&+ 2\lambda^2R\gamma^2 \qint \theta\xi(\nabla\eta\cdot\nabla z)^2\,dxdt
\notag	\\
	=& \bt{3} + \dt{4} + \dt{5} + \dt{6}.
\end{align}
In the above expression, in $\dt{4}$ we have introduced the notation 
\begin{align*}
	D^2\eta(\zeta,\zeta):= \sum_{i,j=1}^N\partial_{x_ix_j}\eta\zeta_i\zeta_j, \;\;\;\forall\zeta\in\RR^N.
\end{align*}
Moreover we have that there exists a constants $C>0$ such that
\begin{align}\label{M11-3}
	\dt{4} &=2\lambda R\gamma^2 \qint \theta\xi D^2\eta(\nabla z,\nabla z)\,dxdt\notag\\
	&\geq -\lambda RC \qint \theta\xi|\nabla z|^2\,dxdt
\end{align}
and
\begin{align}\label{M11-4}
	\dt{5} =& \lambda R\gamma^2 \sint \theta\xi\partial_{\nu}\eta|\nabla z|^2\,d\sigma dt - \lambda R\gamma^2 \qint \theta\,\textrm{div}(\xi\nabla\eta)|\nabla z|^2\,dxdt \notag\\
	=& \bt{4} - \lambda R\gamma^2 \qint \theta\xi\Delta\eta|\nabla z|^2\,dxdt - \lambda R\gamma^2 \qint \theta\left(\nabla\xi\cdot\nabla\eta\right)|\nabla z|^2\,dxdt
	\notag\\
	\geq &\bt{4}  - \lambda RC\qint \theta\xi|\nabla z|^2\,dxdt - \lambda^2 R\gamma^2 \qint \theta\xi|\nabla\eta|^2\,|\nabla z|^2\,dxdt,
\end{align}
where we have also used  \eqref{e-eta}. In addition we have that there exists a constant $C>0$ such that
\begin{align}\label{dt6}
\dt{6}= 2\lambda^2R\gamma^2 \qint \theta\xi(\nabla\eta\cdot\nabla z)^2\,dxdt\ge 2\lambda^2RC \qint \theta\xi|\nabla z|^2\,dxdt.
\end{align}
Since $\nabla z$ vanishes at $t=0$ and at $t=T$, then integrating by parts, we get that
\begin{align}\label{M11-5}
	\langle M_{1,3},M_{2,2}\rangle_{L^2(\Omega_T)} &= \gamma\sint z_t\partial_{\nu}z\,d\sigma dt =: \bt{5}.
\end{align}
Combining \eqref{M11-1}, \eqref{M11-2}, \eqref{M11-3}, \eqref{M11-4}, \eqref{dt6}  and \eqref{M11-5}  we get that for $\lambda$ large enough,
\begin{align}\label{M1-22}
	\langle M_1,M_{2,2}\rangle_{L^2(\Omega_T)} \geq &\bt{1} + \bt{2} + \bt{3} + \bt{4} + \bt{5} + \lambda^2RC \qint \theta\xi|\nabla z|^2\,dxdt\notag \\
&- \lambda^4R^2C \qint \theta^2\xi^2z^2\,dxdt,
\end{align}
for some constant $C>0$ depending only on $\eta$ and $\gamma$.\\

{\bf Step 2.3. Estimate from below of $\langle M_1,M_{2,3}\rangle_{L^2(\Omega_T)}$}. First, we notice that there exist two constants $\varsigma_1>0$ and $\varsigma_2>0$ such that
\begin{align}\label{alpha_t_est}
	|\alpha_t|\leq\varsigma_1\theta^2\xi^2\;\mbox{ and }\;\; |\alpha_{tt}|\leq\varsigma_2\theta^3\xi^3.
\end{align}
Next, using \eqref{alpha_t_est}, we get that there exists a constant $C>0$ such that if $\lambda$ is large enough, then
\begin{align}\label{M1-3}
	\langle M_{1,1},M_{2,3}\rangle_{L^2(\Omega_T)} &= -2\lambda^2R^2\gamma \qint \theta\xi|\nabla\eta|^2\alpha_tz^2\,dxdt \notag
	\\
	&\geq -\lambda^2R^2C\qint \theta^3\xi^3z^2\,dxdt.
\end{align}
Calculating and integrating by parts we get that

\begin{align}\label{M12-1}
	\langle M_{1,2},M_{2,3}&\rangle_{L^2(\Omega_T)}\notag
	\\
	= &-\lambda R^2\gamma \qint \theta\xi\alpha_t\left(\nabla\eta\cdot\nabla(z^2)\right)\,dxdt \notag
	\\
	=& -\lambda R^2\gamma \sint \theta\xi\alpha_t\partial_{\nu}\eta z^2\,d\sigma dt + \lambda R^2\gamma \qint \theta\textrm{div}(\xi\alpha_t\nabla\eta)z^2\,dxdt \notag
	\\
	=& \bt{6} + \lambda R^2\gamma \qint \theta\xi\alpha_t\Delta\eta z^2\,dxdt + \lambda R^2\gamma \qint \theta\xi(\nabla\alpha_t\cdot\nabla\eta)z^2\,dxdt \notag
	\\
	&+ \lambda R^2\gamma \qint \theta\alpha_t(\nabla\xi\cdot\nabla\eta)z^2\,dxdt \notag
	\\
	 = &\bt{6} + \lambda R^2\gamma \qint \theta\xi\alpha_t\Delta\eta z^2\,dxdt + \lambda R^2\gamma \qint \theta\xi(\nabla\alpha_t\cdot\nabla\eta)z^2\,dxdt \notag
	\\
	&+ \lambda^2R^2\gamma \qint \theta\alpha_t(\nabla\eta\cdot\nabla\eta)z^2\,dxdt \notag
	\\
	&\geq \bt{6} - \lambda^2R^2C \qint \theta^3\xi^3z^2\,dxdt,
\end{align}
where we have set
\begin{align*}
\bt{6}:= -\lambda R^2\gamma \sint \theta\xi\alpha_t\partial_{\nu}\eta z^2\,d\sigma dt.
\end{align*}
In addition we have that there exists a constant $C>0$ such that

\begin{align}\label{M13-1}
	\langle M_{1,3},M_{2,3}\rangle_{L^2(\Omega_T)} = \frac{R}{2} \qint \alpha_t(z^2)_t\,dxdt 
= -\frac{R}{2} \qint \alpha_{tt}z^2\,dxdt \geq - RC\qint \theta^3\xi^3 z^2\,dxdt.
\end{align}
Combining \eqref{M1-3}, \eqref{M12-1} and \eqref{M13-1}, we get that there exists a constant $C>0$ such that for $\lambda$ and $R$ large enough, we have
\begin{align}\label{M1-23}
	\langle M_1,M_{2,3}\rangle_{L^2(\Omega_T)} &\geq \bt{6} - \lambda^2R^2C \qint \theta^3\xi^3 z^2\,dxdt.
\end{align}
Finally, it follows from \eqref{M1-21}, \eqref{M1-22} and \eqref{M1-23}  that  there is a constant $C>0$ such that for $\lambda$ and $R$ large enough, we have
\begin{align}\label{M1-M2}
	\langle M_1,M_2\rangle_{L^2(\Omega_T)} \geq &\lambda^4R^3C \qint \theta^3\xi^3 z^2\,dxdt + \lambda^2RC\qint \theta\xi|\nabla z|^2\,dxdt \notag
	\\
	&+ \bt{1} + \bt{2} + \bt{3} + \bt{4} + \bt{5} + \bt{6}.
\end{align}

{\bf Step 2.4. Estimate from below of $	-\norm{f}{L^2(\Omega_T)}^2$}.
We have that there exists a constant $C>0$ such that
\begin{align}\label{f}
	-\norm{f}{L^2(\Omega_T)}^2 &= -\norm{\lambda R\gamma\theta\xi\Delta\eta z  - \lambda^2R\gamma\theta\xi|\nabla\eta|^2z}{L^2(Q)}^2 \notag
	\\
	&\geq -\lambda^2R^2\gamma^2 C\qint \theta^2\xi^2|\Delta\eta|^2z^2\,dxdt  - \lambda^4R^2\gamma^2 C\qint \theta^2\xi^2|\nabla\eta|^4 z^2\,dxdt\notag
	\\
	&\geq -\lambda^2R^2C\qint \theta^2\xi^2z^2\,dxdt - \lambda^4R^2C \qint \theta^2\xi^2z^2\,dxdt.
\end{align}

It follows from \eqref{M1-M2} and \eqref{f}, that for $\lambda$ and $R$ large enough, all these terms can be absorbed and we get that there exists a constant $C>0$ such that
\begin{align}\label{scalar_product_dist}
	2\langle M_1,M_2\rangle_{L^2(\Omega_T)} - \norm{f}{L^2(Q)}^2 \geq &\lambda^4R^3C \qint \theta^3\xi^3 z^2\,dxdt + \lambda^2RC \qint \theta\xi|\nabla z|^2\,dxdt \nonumber
	\\
	&+ \bt{1} + \bt{2} + \bt{3} + \bt{4} + \bt{5} + \bt{6}. 
\end{align}

\noindent
{\bf Step 3. Estimate of the boundary terms}.
Let us now compute the boundary terms. Integrating by parts, we get that
\begin{align*}
	\langle N_{1,1}, N_{2,1}\rangle_{L^2(\Sigma_T)} &= \delta\sint \partial_tz_\Gamma\LB z_\Gamma\,d\sigma dt = -\delta\sint \GB (\partial_tz_\Gamma)\cdot\GB z_\Gamma\,d\sigma dt \\
	&= -\frac{\delta}{2} \sint \partial_t\left(|\GB z_\Gamma|^2\right)\,d\sigma dt = 0,
	\\
	\langle N_{1,2}, N_{2,1}\rangle_{L^2(\Sigma_T)} &= \lambda R\gamma\delta \sint \theta\xi\partial_{\nu}\eta z_\Gamma\LB z_\Gamma\,d\sigma dt \\
	&= - \lambda R\gamma\delta \sint \theta\xi z_\Gamma\GB(\partial{_\nu}\eta)\cdot\GB z_\Gamma \,d\sigma dt - \lambda R\gamma\delta \sint \theta\xi\partial_{\nu}\eta |\GB z_\Gamma|^2\,d\sigma dt
	\\
	\langle N_{1,1}, N_{2,2}\rangle_{L^2(\Sigma_T)} &= \frac{R}{2} \sint \alpha_t\partial_t(z_\Gamma^2)\,d\sigma dt = -\frac{R}{2} \sint \alpha_{tt}z_\Gamma^2\,d\sigma dt
	\\
	\langle N_{1,2}, N_{2,2}\rangle_{L^2(\Sigma_T)} &= \lambda R^2\gamma \sint \theta\xi\alpha_t\partial_{\nu}\eta z_\Gamma^2\,d\sigma dt
	\\
	\langle N_{1,1}, N_{2,3}\rangle_{L^2(\Sigma_T)} &= -\gamma \sint \partial_tz_\Gamma\partial_{\nu}z\,d\sigma dt
	\\
	\langle N_{1,1}, N_{2,3}\rangle_{L^2(\Sigma_T)} &= -\lambda R\gamma \sint \theta\xi\partial_{\nu}\eta z_\Gamma\partial_{\nu}z\,d\sigma dt.
\end{align*}
Therefore, we have that
\begin{align*}
	\sum_{i=1}^6 \bt{i} &+ 2\langle N_1, N_2\rangle_{L^2(\Sigma_T)} -\norm{h}{L^2(\Sigma_T)}^2 
	\\
	=& -\lambda^3R^3\gamma^2 \sint \theta^3\xi^3|\nabla\eta|^2\partial_{\nu}\eta z_\Gamma^2\,d\sigma dt - 2\lambda^2R\gamma^2 \sint \theta\xi|\nabla\eta|^2z_\Gamma\partial_{\nu}z\,d\sigma dt 
	\\
	&+\lambda R^2\gamma \sint \theta\xi\alpha_t\partial_{\nu}\eta z_\Gamma^2\,d\sigma dt - R \sint \alpha_{tt}z_\Gamma^2\,d\sigma dt - \sint \beta^2z_\Gamma^2\,d\sigma dt
	\\
	& -2\lambda R\gamma^2 \sint \theta\xi\partial_{\nu}\eta z_\Gamma\partial_{\nu}z\,d\sigma dt - \gamma\sint \partial_t z_\Gamma\partial_\nu z\,d\sigma dt
	\\
	&- 2\lambda R\gamma^2 \sint \theta\xi\partial_\nu\eta|\partial_\nu z|^2\,d\sigma dt + \lambda R\gamma^2 \sint \theta\xi\partial_\nu\eta|\nabla z|^2\,d\sigma dt
	\\
	&- 2\lambda R\gamma \delta\sint \theta\xi z_\Gamma\GB(\partial{_\nu}\eta)\cdot\GB z_\Gamma \,d\sigma dt - 2\lambda R\gamma \delta\sint \theta\xi\partial_{\nu}\eta |\GB z_\Gamma|^2\,d\sigma dt=: J.
\end{align*}
Moreover,
\begin{align*}
	- 2\lambda R\gamma^2 \sint \theta\xi\partial_\nu\eta|\partial_\nu z|^2\,d\sigma dt 
	&= - 2\lambda R\gamma^2 \sint \theta\xi\partial_\nu\eta(\nu\cdot\nabla z)^2\,d\sigma dt\\
	& \geq - 2\lambda R\gamma^2 \sint \theta\xi\partial_\nu\eta|\nabla z|^2\,d\sigma dt,
\end{align*}
where we notice that
\begin{align*}
|\nabla z|^2|_{\Gamma}=|\GB z|^2+|\partial_{\nu}z|^2.
\end{align*}
Hence, we obtain a first estimate for the expression $J$, namely
\begin{align}\label{JJ-1}
	J &\geq -\lambda^3R^3\gamma^2 \sint \theta^3\xi^3|\nabla\eta|^2\partial_{\nu}\eta z_\Gamma^2\,d\sigma dt - 2\lambda^2R\gamma^2 \sint \theta\xi|\nabla\eta|^2z_\Gamma\partial_{\nu}z\,d\sigma dt \notag
	\\
	&+\lambda R^2\gamma \sint \theta\xi\alpha_t\partial_{\nu}\eta z_\Gamma^2\,d\sigma dt - R \sint \alpha_{tt}z_\Gamma^2\,d\sigma dt -\sint  \beta^2z_\Gamma^2\,d\sigma dt\notag
	\\
	& -2\lambda R\gamma^2 \sint \theta\xi\partial_{\nu}\eta z_\Gamma\partial_{\nu}z\,d\sigma dt - \gamma\sint \partial_t z_\Gamma\partial_\nu z\,d\sigma dt - 2\lambda R\gamma \delta\sint \theta\xi z_\Gamma\GB(\partial{_\nu}\eta)\cdot\GB z_\Gamma \,d\sigma dt\notag
	\\
	& - 2\lambda R\gamma \delta\sint \theta\xi\partial_{\nu}\eta |\nabla z|^2\,d\sigma dt -\lambda R\gamma^2 \sint \theta\xi\partial_\nu\eta|\nabla z|^2\,d\sigma dt.
\end{align}
We mention that
\begin{align}\label{cond-eta}
\GB\eta=0,\;\;\;|\nabla\eta|=|\partial_\nu\eta|,\;\;\partial_\nu\eta\le -C<0\;\;\mbox{ on }\;\Gamma,
\end{align}
for some constant $C>0$.
Now since $\left.\partial_{\nu}\eta\right|_{\Gamma} <0$ (by \eqref{cond-eta}),  we have that the last two terms in the right-hand side of \eqref{JJ-1} are positive and we can ignore them. Moreover, we recall that  there exists a constant $C>0$ such that $|\nabla\eta|\leq C$ and that $|\alpha_t|\leq C\theta^2\xi^2$. Therefore, from \eqref{JJ-1} we obtain
\begin{align}\label{JJ-2}
	J \geq &\lambda^3R^3C \sint \theta^3\xi^3 \left(1-\frac{1}{\lambda^2 R}\right) z_\Gamma^2\,d\sigma dt - \lambda^2RC \sint \theta\xi z_\Gamma\partial_{\nu}z\,d\sigma dt - R \sint \alpha_{tt}z_\Gamma^2\,d\sigma dt \notag
	\\
	&- \sint \theta^3\xi^3(\theta\xi)^{-3} \beta^2z_\Gamma^2\,d\sigma dt -2\lambda RC \sint \theta\xi z_\Gamma\partial_{\nu}z\,d\sigma dt - \gamma\sint \partial_t z_\Gamma\partial_\nu z\,d\sigma dt \notag
	\\
	&- 2\lambda R\gamma \delta\sint \theta\xi z_\Gamma\GB(\partial{_\nu}\eta)\cdot\GB z_\Gamma \,d\sigma dt.
\end{align}
In order to treat the last integral in the right-hand side of \eqref{JJ-2}, we recall from Section 1.2 that $\norm{\cdot}{L^2(\Gamma)}+\norm{\GB\cdot}{L^2(\Gamma)}$ defines an equivalent norm on $W^{1,2}(\Gamma)$. Hence, the interpolation inequality \eqref{GB_interpol} yields
\begin{align*}
	\norm{\GB z_\Gamma}{L^2(\Gamma)}^2\leq C \norm{z_\Gamma}{L^2(\Gamma)}\norm{z_\Gamma}{D(\LB)}.
\end{align*}
Therefore, we have
\begin{align}\label{in-delta}
	\left|\sint \delta\theta\xi z_\Gamma\GB(\partial{_\nu}\eta)\cdot\GB z_\Gamma \,d\sigma dt\,\right| &\leq \sint \theta\xi |\GB(\partial{_\nu}\eta)|\,|\delta\GB z_\Gamma| |z_\Gamma|\,d\sigma dt \leq C\sint \theta\xi |\delta\GB z_\Gamma| |z_\Gamma|\,d\sigma dt\notag
	\\
	&\leq C\int_0^T \theta\xi\delta^2\norm{\GB z_\Gamma}{L^2(\Gamma)}^2\,dt + C\sint \theta\xi z_\Gamma^2\,d\sigma dt\notag
	\\
	&\leq C\int_0^T \theta\xi\delta^2\norm{z_\Gamma}{D(\LB)}^2\,dt + C\sint \theta\xi z_\Gamma^2\,d\sigma dt\notag
	\\
	&\leq C\sint \theta\xi |\delta\LB z_\Gamma|^2\,d\sigma dt + C\sint \theta\xi z_\Gamma^2\,d\sigma dt.
\end{align}
Plugging \eqref{in-delta} in \eqref{JJ-2} we get
\begin{align}\label{JJ-3}
	J \geq &\lambda^3R^3C \sint \theta^3\xi^3 \left(1-\frac{1}{\lambda^2 R}\right) z_\Gamma^2\,d\sigma dt - \lambda^2RC \sint \theta\xi z_\Gamma\partial_{\nu}z\,d\sigma dt - R \sint \alpha_{tt}z_\Gamma^2\,d\sigma dt \notag
	\\
	&- \sint \theta^3\xi^3(\theta\xi)^{-3} \beta^2z_\Gamma^2\,d\sigma dt -2\lambda RC \sint \theta\xi z_\Gamma\partial_{\nu}z\,d\sigma dt - \gamma\sint \partial_t z_\Gamma\partial_\nu z\,d\sigma dt \notag
	\\
	&- \lambda RC \sint \theta\xi |\delta\LB z_\Gamma|^2\,d\sigma dt - \lambda RC \sint \theta^3\xi^3 (\theta\xi)^{-2} z_\Gamma^2\,d\sigma dt.
\end{align}
Moreover, by definition of $\theta$ and $\xi$ we also have that there exists a constant $C>0$ such that for all $k\in\NN$, 
\begin{align}\label{theta_xi_est}
	|\theta\xi|^{-k}\leq C.
\end{align}
Thus using \eqref{theta_xi_est}, we get from \eqref{JJ-3} that
\begin{align}\label{JJ-4}
	J \geq &\lambda^3R^3C \sint \theta^3\xi^3 \left(1-\frac{1}{\lambda^2 R}-\frac{1}{\lambda^3 R^3}-\frac{1}{\lambda^2 R^2}\right) z_\Gamma^2\,d\sigma dt - \lambda^2RC \sint \theta\xi z_\Gamma\partial_{\nu}z\,d\sigma dt \notag
	\\
	&- R \sint \alpha_{tt}z_\Gamma^2\,d\sigma dt -2\lambda RC \sint \theta\xi z_\Gamma\partial_{\nu}z\,d\sigma dt - \gamma\sint \partial_t z_\Gamma\partial_\nu z\,d\sigma dt \notag
	\\
	&- \lambda RC \sint \theta\xi |\delta\LB z_\Gamma|^2\,d\sigma dt.
\end{align}
Now, thanks to the boundary conditions in \eqref{heat_dbc_z}, we have
\begin{align}\label{MM1}
	- \gamma\sint \partial_t z_\Gamma\partial_\nu z\,d\sigma dt =& -\frac{\gamma}{2}\sint \partial_t z_\Gamma\partial_\nu z\,d\sigma dt -\frac{\gamma}{2}\sint \partial_t z_\Gamma\partial_\nu z\,d\sigma dt \notag
	\\
	=& -\frac{\gamma}{2}\sint \partial_t z_\Gamma\partial_\nu z\,d\sigma dt -\frac{1}{2}\sint |\partial_t z_\Gamma|^2\,d\sigma dt -\frac{\delta}{2}\sint \partial_t z_\Gamma\LB z_\Gamma\,d\sigma dt\notag
	\\
	&+\frac{1}{2}\sint (\beta-R\alpha_t+R\gamma\partial_{\nu}\alpha)z_\Gamma\partial_t z_\Gamma\,d\sigma dt.
\end{align}
In addition, integrating the last term in the right hand side of \eqref{MM1} by parts in time yields 
\begin{align*}
	\frac{1}{2}\sint (\beta-R\alpha_t+R\gamma\partial_{\nu}\alpha)z_\Gamma\partial_t z_\Gamma\,d\sigma dt &= \frac{1}{4}\sint (\beta-R\alpha_t+R\gamma\partial_{\nu}\alpha)\partial_t(z_\Gamma^2)\,d\sigma dt
	\\
	&= -\frac{R}{4}\sint (\gamma\partial_{\nu}\alpha_t-\alpha_{tt})z_\Gamma^2\,d\sigma dt.
\end{align*}
Therefore, from \eqref{JJ-4} we obtain
\begin{align}\label{JJ-5}
	J \geq &\lambda^3R^3C \sint \theta^3\xi^3 \left(1-\frac{1}{\lambda^2 R}-\frac{1}{\lambda^3 R^3}-\frac{1}{\lambda^2 R^2}\right) z_\Gamma^2\,d\sigma dt - \lambda^2RC \sint \theta\xi z_\Gamma\partial_{\nu}z\,d\sigma dt \notag
	\\
	&- \frac{3R}{4} \sint \alpha_{tt}z_\Gamma^2\,d\sigma dt -2\lambda RC \sint \theta\xi z_\Gamma\partial_{\nu}z\,d\sigma dt - \frac{\gamma}{2}\sint \partial_t z_\Gamma\partial_\nu z\,d\sigma dt -\frac{R\gamma}{4} \sint \partial_{\nu}\alpha_t z_\Gamma^2\, \sigma dt \notag
	\\
	&-\frac{1}{2}\sint |\partial_t z_\Gamma|^2\,d\sigma dt -\frac{\delta}{2}\sint \partial_t z_\Gamma\LB z_\Gamma\,d\sigma dt -\lambda RC \sint \theta\xi |\delta\LB z_\Gamma|^2\,d\sigma dt.
\end{align}
Now, by definition of $\alpha$ we have $\partial_{\nu}\alpha_t = -\lambda\theta_t\xi\partial_{\nu}\eta$ and $|\partial_{\nu}\alpha_t|\leq \lambda C\theta^2\xi^3$. Hence, using also \eqref{alpha_t_est} and \eqref{theta_xi_est}, and for $\lambda$ and $R$ large enough, \eqref{JJ-5} becomes
\begin{align}\label{JJ-6}
	J \geq &\lambda^3R^3C \sint \theta^3\xi^3 \left(1-\frac{1}{\lambda^2 R}-\frac{1}{\lambda^3 R^3}-\frac{1}{\lambda^2 R^2}-\frac{1}{\lambda^3 R^2}\right) z_\Gamma^2\,d\sigma dt  \notag
	\\
	&- \lambda^2RC \sint \theta\xi z_\Gamma\partial_{\nu}z\,d\sigma dt -2\lambda RC \sint \theta\xi z_\Gamma\partial_{\nu}z\,d\sigma dt - \lambda RC\sint \partial_t z_\Gamma\partial_\nu z\,d\sigma dt \notag
	\\
	&-\lambda RC \sint \theta\xi \Big(|\partial_t z_\Gamma|^2 + |\delta\LB z_\Gamma|^2 +2\partial_t z_\Gamma\LB z_\Gamma\Big) \,d\sigma dt \notag
	\\
	\geq &\lambda^3R^3C \sint \theta^3\xi^3 \left(1-\frac{1}{\lambda^2 R}-\frac{1}{\lambda^3 R^3}-\frac{1}{\lambda^2 R^2}-\frac{1}{\lambda^3 R^2}\right) z_\Gamma^2\,d\sigma dt - \lambda^2RC \sint \theta\xi z_\Gamma\partial_{\nu}z\,d\sigma dt \notag
	\\
	&-2\lambda RC \sint \theta\xi z_\Gamma\partial_{\nu}z\,d\sigma dt - 3\lambda RC\sint \partial_t z_\Gamma\partial_\nu z\,d\sigma dt -2\lambda RC\gamma\delta \sint \LB z_\Gamma\partial_\nu z\,d\sigma dt \notag 
	\\
	&-\lambda RC \sint \theta\xi \Big(|\partial_t z_\Gamma|^2 + |\delta\LB z_\Gamma|^2 +2\partial_t z_\Gamma\LB z_\Gamma -2\partial_t z_\Gamma\partial_\nu z -2\gamma\delta\LB z_\Gamma\partial_\nu z\Big) \,d\sigma dt. 
\end{align}
Furthermore, using the Young inequality we obtain that
\begin{align*}
	&\;\;- \lambda^2RC \sint \theta\xi z_\Gamma\partial_{\nu}z_\Gamma\,d\sigma dt \geq - \lambda^3RC \sint \theta\xi z_\Gamma^2\,d\sigma dt - \lambda RC \sint \theta\xi |\gamma\partial_{\nu}z|^2\,d\sigma dt
	\\
	&\;\;-2\lambda RC \sint \theta\xi z_\Gamma\partial_{\nu}z\,d\sigma dt \geq -\lambda RC \sint \theta\xi z_\Gamma^2\,d\sigma dt -\lambda RC \sint \theta\xi|\gamma\partial_{\nu}z|^2\,d\sigma dt
	\\
	&\;\;- 3\lambda RC \sint \partial_t z_\Gamma\partial_\nu z\,d\sigma dt \geq - C\sint |\partial_t z_\Gamma|^2\,d\sigma dt - C\sint |\gamma\partial_\nu z|^2\,d\sigma dt
	\\
	&\;\;-2\lambda RC\gamma\delta \sint \LB z_\Gamma\partial_\nu z\,d\sigma dt \geq -\lambda RC \sint |\delta\LB z_\Gamma|^2\,d\sigma dt -\lambda RC \sint |\delta\partial_\nu z|^2\,d\sigma dt.
\end{align*}
By means of these four expressions, from \eqref{JJ-6} we get that
\begin{align*}
	J \geq& \lambda^3R^3C \sint \theta^3\xi^3\left(1-\frac{1}{R^2}-\frac{2}{\lambda^2R^2}-\frac{1}{\lambda^2R}-\frac{1}{\lambda^3R^2}-\frac{1}{\lambda^3R^3}\right) z_\Gamma^2\,d\sigma dt 
	\\
	& -\lambda RC \sint \theta\xi \Big(|\partial_t z_\Gamma|^2 + |\delta\LB z_\Gamma|^2 +|\gamma\partial_\nu z|^2 +2\delta\partial_t z_\Gamma\LB z_\Gamma -2\gamma\partial_t z_\Gamma\partial_\nu z -2\gamma\delta\LB z_\Gamma\partial_\nu z\Big) \,d\sigma dt 
	\\
	=& \lambda^3R^3C \sint \theta^3\xi^3\left(1-\frac{1}{R^2}-\frac{2}{\lambda^2R^2}-\frac{1}{\lambda^2R}-\frac{1}{\lambda^3R^2}-\frac{1}{\lambda^3R^3}\right) z_\Gamma^2\,d\sigma dt 
	\\
	& -\lambda RC \sint \theta\xi |\partial_t z_\Gamma + \delta\LB z_\Gamma -\gamma\partial_\nu z|^2 \,d\sigma dt.
\end{align*}
Hence, for $\lambda$ and $R$ large enough, we finally have
\begin{align*}
	J \geq&\lambda^3R^3C \sint \theta^3\xi^3 z_\Gamma^2\,d\sigma dt -\lambda RC \sint \theta\xi |\partial_t z_\Gamma + \delta\LB z_\Gamma -\gamma\partial_\nu z|^2 \,d\sigma dt.
\end{align*}
Now, collecting all the above estimates, we get from \eqref{ineq_norm} that
 
\begin{align}\label{MM2}
&	\lambda^3R^2C \qint \theta^3\xi^3 z^2\,dxdt + \lambda C\qint \theta\xi|\nabla z|^2\,dxdt + \lambda^2 R^2 C \sint \theta^3\xi^3  z_\Gamma^2\,d\sigma dt \notag
	\\
	&\leq C \sint \theta\xi\left|\partial_tz_\Gamma+\delta\LB z_\Gamma-\gamma\partial_{\nu}z\right|^2\,d\sigma dt.
\end{align}
Recall that $z=\phi e^{-R\alpha}$ so that using the fact that the functions $\alpha$ and $\eta$ are constants at the boundary $\Gamma$, we get that
\begin{equation}\label{MM3}
\begin{cases}
\nabla z=&e^{-R\alpha}\Big(\nabla \phi-R\phi\nabla\alpha\Big)\\
\partial_{\nu}z=&e^{-R\alpha}\Big(\partial_{\nu}\phi-R\phi\partial_\nu\alpha\Big)\\
\partial_tz_\Gamma=& e^{-R\alpha}\Big(\phi_t-R\phi\alpha_t\Big)\\
\LB z_\Gamma=&e^{-R\alpha}\LB\phi_\Gamma.
\end{cases}
\end{equation}
Finally, coming back to the variable $\phi$ in \eqref{MM2} by using \eqref{MM3}, we  obtain the estimate \eqref{carleman} and the proof is finished.
\end{proof}

We conclude this (sub)section with the following remark.

\begin{remark}
{\em We notice that all the above estimates including the Carleman estimate \eqref{carleman} hold for $\delta=0$.
}
\end{remark}

\subsection{The observability inequality}

In this (sub)section, we give the last ingredient needed in the proof of our main result, namely we show an observability inequality.

\begin{proposition}\label{prop-22}
Let $T>0$ be fixed but arbitrary. Then there exists a constant $C_T>0$ such that for every $(\phi_T,\phi_{\Gamma,T})\in \mathbb X^2(\bOm)$, the unique mild solution $\phi$ of the backward system \eqref{heat_dbc_adj} satisfies the estimate
\begin{align}\label{dbc_observ}
\int_{\Omega}|\phi(x,0)|^2\;dx +\int_{\Gamma}|\phi_{\Gamma}(x,0)|^2\;d\sigma \leq C_T \sint \left|\beta(x)\phi_\Gamma(x,t)\right|^2\,d\sigma dt.
\end{align}
\end{proposition}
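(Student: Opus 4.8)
The plan is to derive \eqref{dbc_observ} directly from the Carleman estimate \eqref{carleman}, combined with the boundary equation of the adjoint system and a standard energy dissipation estimate. First I would note that, by the second equation in \eqref{heat_dbc_adj}, the strong solution $\phi$ satisfies $\partial_t\phi_{\Gamma}+\delta\LB\phi_\Gamma-\gamma\partial_\nu\phi=\beta\phi_\Gamma$ on $\Sigma_T$. Substituting this identity into the right-hand side of \eqref{carleman} and discarding the (nonnegative) gradient term on the left produces
\begin{align*}
\lambda^3R^2\qint\theta^3\xi^3e^{-2R\alpha}\phi^2\,dxdt+\lambda^2R^2\sint\theta^3\xi^3e^{-2R\alpha}\phi_\Gamma^2\,d\sigma dt\le C\sint\theta\xi e^{-2R\alpha}|\beta\phi_\Gamma|^2\,d\sigma dt.
\end{align*}
I would then fix $\lambda$ and $R$ large enough for \eqref{carleman} to hold, so that all weights are fixed functions of $(x,t)$. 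Since $\alpha\to+\infty$ as $t\to 0^+$ or $t\to T^-$, the exponential decay of $e^{-2R\alpha}$ dominates the polynomial blow-up of $\theta\xi$, so $\theta\xi e^{-2R\alpha}$ is bounded on $\bOm\times(0,T)$; hence the right-hand side is controlled by $C\sint|\beta\phi_\Gamma|^2\,d\sigma dt$, exactly the observation term in \eqref{dbc_observ}.

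Next I would localize the left-hand side in time. On the interval $[T/4,3T/4]$ both $\theta$ and $\alpha$ are bounded above and below by positive constants, so the weight $\theta^3\xi^3e^{-2R\alpha}$ is bounded below there by a strictly positive constant. Dropping the contribution of $(0,T/4)\cup(3T/4,T)$ on the left therefore yields
\begin{align*}
\int_{T/4}^{3T/4}\left(\int_\Omega\phi^2\,dx+\int_\Gamma\phi_\Gamma^2\,d\sigma\right)dt\le C_T\sint|\beta\phi_\Gamma|^2\,d\sigma dt.
\end{align*}

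It then remains to bound the energy at $t=0$ by this interior-in-time integral. Writing $\Phi=(\phi,\phi_\Gamma)$, Proposition \ref{ex-sol} gives $\Phi(\cdot,t)=e^{(T-t)A_\delta}\Phi_T$, so $g(t):=\|\Phi(\cdot,t)\|_{\mathbb X^2(\bOm)}^2$ satisfies $g'(t)=2\mathcal E_\delta(\Phi(\cdot,t),\Phi(\cdot,t))\ge 0$, since $-A_\delta$ is the nonnegative self-adjoint operator associated with $\mathcal E_\delta$. Thus $g$ is nondecreasing (equivalently, the contraction semigroup $e^{tA_\delta}$ makes the energy of the backward adjoint solution nonincreasing in reverse time), whence $g(0)\le g(t)$ for every $t\in[T/4,3T/4]$, and integrating gives $g(0)\le\frac{2}{T}\int_{T/4}^{3T/4}g(t)\,dt$. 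Combining this with the previous inequality proves \eqref{dbc_observ} for strong solutions. Finally, since $\mathbb W_\delta^{1,2}(\bOm)$ is dense in $\mathbb X^2(\bOm)$ and the mild solution depends continuously on $\Phi_T$ through \eqref{est-msol}, a density argument extends the estimate to all $(\phi_T,\phi_{\Gamma,T})\in\mathbb X^2(\bOm)$.

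I expect the only genuinely delicate point to be the justification of the weight bounds, namely that $\theta\xi e^{-2R\alpha}$ stays bounded on all of $\bOm\times(0,T)$ while $\theta^3\xi^3e^{-2R\alpha}$ remains bounded below away from the endpoints; everything else is a routine assembly of the Carleman estimate with the energy monotonicity of the self-adjoint generator $A_\delta$ and a density argument.
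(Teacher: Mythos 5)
Your proposal is correct and follows essentially the same route as the paper: apply the Carleman estimate with the boundary equation substituted on the right, use the boundedness of $\theta\xi e^{-2R\alpha}$ on $\Sigma_T$ and the positive lower bound of $\theta^3\xi^3 e^{-2R\alpha}$ on $\Omega\times[T/4,3T/4]$, then propagate the energy back to $t=0$ and conclude by density. The only (harmless) difference is in the energy step, where you use monotonicity of $t\mapsto\|\Phi(\cdot,t)\|_{\mathbb X^2(\bOm)}^2$ directly from the nonnegativity of $\mathcal E_\delta$, whereas the paper invokes the coercivity \eqref{sobo} to get the same conclusion with an extra exponential factor.
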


\begin{proof}
First, assume that $(\phi_T,\phi_{\Gamma,T})\in \mathbb W_\delta^{1,2}(\bOm)$ and let $\phi$ be the unique strong solution of the backward system \eqref{heat_dbc_adj} with final data $(\phi_T,\phi_{\Gamma,T})$.
Let $\lambda\geq\lambda_0$ and $R\geq R_0$ be fixed such that the estimate \eqref{carleman} holds. Then in particular, we have that
\begin{align}\label{IN-CAR}
&	\lambda^3R^2\qint \theta^3\xi^3e^{-2R\alpha}\phi^2\,dxdt + \lambda^2 R^2 \sint \theta^3\xi^3e^{-2R\alpha}  \phi_{\Gamma}^2\,d\sigma dt \notag\\
	&\leq C\sint \theta\xi e^{-2R\alpha}\left|\partial_t\phi_{\Gamma}+\delta\LB\phi_{\Gamma}-\gamma\partial_{\nu}\phi\right|^2\,d\sigma dt.
\end{align}
It is straightforward to check that there exist two positive constants $\mathcal{P}_1$ and $\mathcal{P}_2$ such that
\begin{equation}\label{const}
\begin{cases}
		\theta^3\xi^3 e^{-2R\alpha}\geq\mathcal{P}_1\;\;\;&\mbox{ in } \Omega\times\left[\frac{T}{4},\frac{3T}{4}\right],\\
		\theta\xi e^{-2R\alpha}\leq\mathcal{P}_2 &\mbox{ on }\; \Sigma_T.
\end{cases}
\end{equation}
Using \eqref{const} we get from \eqref{IN-CAR} that there is a constant $C>0$ such that
\begin{align*}
	\int_{\frac{T}{4}}^{\frac{3T}{4}}\int_{\Omega} \phi^2\,dxdt + \int_{\frac{T}{4}}^{\frac{3T}{4}}\int_{\Gamma} \phi_{\Gamma}^2\,d\sigma dt \leq C \sint \left|\partial_t\phi_{\Gamma}+\delta\LB\phi_{\Gamma}-\gamma\partial_{\nu}\phi\right|^2\,d\sigma dt.
\end{align*}
Second, multiplying the first two equations in \eqref{heat_dbc_adj} by $\phi$ and $\phi_\Gamma$,  and integrating over $\Omega$ and $\Gamma$, respectively, we obtain that
\begin{align}\label{IN-1}
\frac{1}{2}\frac{d}{dt}\int_{\Omega} \phi^2\,dx = \gamma\int_{\Omega} |\nabla\phi|^2\,dx  - \gamma\int_{\Gamma}\phi_{\Gamma}\partial_{\nu}\phi\,d\sigma
\end{align}
and
\begin{align}\label{IN-2}
\frac{1}{2}\frac{d}{dt}\int_{\Gamma} \phi_{\Gamma}^2\,d\sigma = \delta\int_{\Gamma}|\nabla_\Gamma \phi_\Gamma|^2\;d\sigma+\gamma\int_{\Gamma}\phi_{\Gamma}\partial_{\nu}\phi\,d\sigma + \int_{\Gamma} \beta\phi_{\Gamma}^2\,d\sigma.
\end{align}
Adding \eqref{IN-1} and \eqref{IN-2} and using \eqref{sobo} we get that there is a constant $C>0$ such that
\begin{align*}
	\frac{1}{2}\frac{d}{dt}\left(\int_{\Omega} \phi^2\,dx + \int_{\Gamma} \phi_{\Gamma}^2\,d\sigma\right) &=\gamma\int_{\Omega} |\nabla\phi|^2\,dx +\delta\int_{\Gamma}|\nabla_\Gamma \phi_\Gamma|^2\;d\sigma + \int_{\Gamma} \beta\phi_{\Gamma}^2\,d\sigma \\
	&\geq C\left(\int_{\Omega}\phi^2\,dx + \int_{\Gamma} \phi_{\Gamma}^2\,d\sigma\right).
\end{align*}
This clearly implies that
\begin{align}\label{IN-E}
	e^{CT}\left(\int_{\Omega} |\phi(x,0)|^2\,dx + \int_{\Gamma} |\phi_{\Gamma}(x,0)|^2\,d\sigma\right)\leq \int_{\Omega} \phi^2\,dx + \int_{\Gamma} \phi_{\Gamma}^2\,d\sigma.
\end{align}
Integrating \eqref{IN-E} in time from $\frac T4$ to $\frac{3T}{4}$ we get that
\begin{align*}
	\frac{T}{2}e^{CT}\left(\int_{\Omega}| \phi(x,0)|^2\,dx + \int_{\Gamma} |\phi_{\Gamma}(x,0)|^2\,d\sigma\right) \leq \int_{\frac{T}{4}}^{\frac{3T}{4}}\left(\int_{\Omega} \phi^2\,dx + \int_{\Gamma} \phi_{\Gamma}^2\,d\sigma\right)\,dt.
\end{align*}
Thus, we  obtain the observability inequality
\begin{align*}
	\int_{\Omega} |\phi(x,0)|^2\,dx + \int_{\Gamma} |\phi_{\Gamma}(x,0)|^2\,d\sigma \leq& \frac{2C_1}{T}e^{-CT} \sint \left|\partial_t\phi_{\Gamma}+\delta\LB\phi_{\Gamma}-\gamma\partial_{\nu}\phi\right|^2\,d\sigma dt\\
	=&\frac{2C_1}{T}e^{-CT} \sint |\beta\phi_\Gamma|^2\,d\sigma dt
\end{align*}	
for a strong solution.

Finally, let $(\phi_T,\phi_{\Gamma,T})\in \mathbb X^2(\bOm)$ and $\phi$ the unique mild solution of the backward system \eqref{heat_dbc_adj}. Let $(\phi_{T,n},\phi_{\Gamma,T,n})\in \mathbb W_\delta^{1,2}(\bOm)$ be a sequence which converges to $(\phi_T,\phi_{\Gamma,T})$ in $\mathbb X^2(\bOm)$. Then the strong solution $(\phi_n,\phi_{n,\Gamma})$ with final data $(\phi_{T,n},\phi_{\Gamma,T,n})$ converges in $C([0,T];\mathbb X^2(\bOm))$ to the mild solution $(\phi,\phi_\Gamma)$ with final data $(\phi_T,\phi_{\Gamma,T})$. It follows from the first part of the proof that
\begin{align}\label{seq}
	\int_{\Omega} \phi_n(x,0)^2\,dx + \int_{\Gamma} \phi_{n,\Gamma}^2(x,0)\,d\sigma \le \frac{2C_1}{T}e^{-CT} \sint |\beta\phi_{n,\Gamma}|^2\,d\sigma dt.
\end{align}
Taking the limit of \eqref{seq} as $n\to\infty$ and using the above mentioned convergence, we get the estimate \eqref{dbc_observ}
and the proof is finished.
\end{proof}

Now we are ready to give the proof of the main result of the paper.

\begin{proof}[\bf Proof of Theorem \ref{dbc_control_thm}]
We use some ideas of the proof of \cite[Theorem 4.2]{MS2013}. Let us introduce the following weighted $L^2$-spaces
\begin{align*}
	\ZQ:=\left\{f\in L^2(\Omega_T):\, e^{R\alpha}(\theta\xi)^{-3/2}f\in L^2(\Omega_T)\right\}, \;\langle f_1,f_2\rangle_{\ZQ} = \qint f_1f_2e^{2R\alpha}(\theta\xi)^{-3}\,dxdt, \nonumber
	\\
	\ZS:=\left\{g\in L^2(\Sigma_T):\, e^{R\alpha}(\theta\xi)^{-3/2}g\in L^2(\Sigma_T)\right\}, \;\langle g_1,g_2\rangle_{\ZS} = \sint g_1g_2e^{2R\alpha}(\theta\xi)^{-3}\,d\sigma  dt.
\end{align*}
The boundary controllability of the system \eqref{heat_dbc} will follow by a duality argument. At this purpose, let us define the bounded linear operator $\TT:L^2(\Sigma_T)\to\mathbb X^2(\bOm)$ by
\begin{align*}
	\TT v:=\int_0^T e^{(T-\tau)A_\delta}(0,-v(\tau))\,d\tau,
\end{align*}
where we recall that $(e^{tA_\delta})_{t\ge 0}$ is the strongly continuous submarkovian semigroup generated by the operator $A_\delta$ in $\mathbb X^2(\bOm)$.
 Using the continuous embedding $\ZQ\times\ZS\hookrightarrow L^2(\Omega_T)\times L^2(\Sigma_T)$, we also introduce the bounded linear operator $\sop:\mathbb X^2(\bOm)\times\ZQ\times\ZS\to\mathbb X^2(\bOm)$ given by
\begin{align*}
	\sop(U_0,f,g):= e^{TA_\delta}U_0 + \int_0^T e^{(T-\tau)A_\delta}(f(\cdot,\tau),g(\cdot,\tau))\,d\tau.
\end{align*}
We claim that
\begin{align}\label{claim}
 \sop(U_0,0,g)-\TT v=(u(\cdot,T),u_\Gamma(\cdot,T)),
 \end{align}
 where  $u$ is the unique mild solution of the system
\begin{align}\label{heat_dbc2}
\begin{cases}
u_t - \gamma\Delta u  = 0 \;\;\;&\mbox{ in }\;\;\Omega\times(0,T)\\
\partial_tu_{\Gamma} -\delta\Delta_\Gamma u_\Gamma+ \gamma\partial_{\nu}u + \beta u = g+v \;\;&\mbox{ on }\;\;\Gamma\times(0,T)\\
\left.(u,u_{\Gamma})\right|_{t=0} = (u_0,u_{\Gamma,0}) &\mbox{ in }\;\;\Omega\times\Gamma.
\end{cases}
\end{align}
In fact, using Proposition \ref{ex-sol} and the representation of mild solution given in Definition \ref{def-sol}, we have that
\begin{align*}
	\sop(U_0,0,g)-\TT v &= e^{TA_\delta}U_0 + \int_0^T e^{(T-\tau)A_\delta}(0,g(\cdot,\tau))\,d\tau - \int_0^T e^{(T-\tau)A_\delta}(0,-v(\cdot,\tau))\,d\tau \\
	&= e^{TA_\delta}U_0 + \int_0^T e^{(T-\tau)A_\delta}(0,g(\cdot,\tau)+v(\cdot,\tau))\,d\tau = (u(\cdot,T),u_\Gamma(\cdot,T)),
\end{align*}
and we have shown the claim.
Furthermore it is straightforward to check that the adjoint operator $\TT^*:\mathbb X^2(\bOm)\to L^2(\Sigma_T)$ is given for $\Phi_T:=(\phi_T,\phi_{\Gamma,T})$ by
\begin{align*}
	\TT^*\Phi_T=- \phi_{\Gamma}
\end{align*}
where $(\phi(t),\phi_{\Gamma}(t))=e^{(T-t)A_\delta}(\phi_T,\phi_{\Gamma,T})$ is the solution of the backward system \eqref{heat_dbc_adj} with final data $(\phi_T,\phi_{\Gamma,T})$, while the adjoint operator $\sop^*:\mathbb X^2(\bOm)\to\mathbb X^2(\bOm)\times\ZQ\times\ZS$ of $\sop$ is given by 
\begin{align*}
	\sop^*\Phi_T=\left((\phi(0),\phi_{\Gamma}(0)),\,e^{-2R\alpha}(\theta\xi)^3\phi,\,e^{-2R\alpha}(\theta\xi)^3\phi_{\Gamma}\right).
\end{align*}
Now, the observability inequality  \eqref{dbc_observ} and the Carleman estimate \eqref{carleman} imply that
\begin{align}\label{IN-F}
	\norm{\sop^*\phi_T}{\mathbb X^2(\bOm)\times\ZQ\times\ZS}^2 = &\norm{(\phi(\cdot,0),\phi_\Gamma(\cdot,0))}{\mathbb X^2(\bOm)}^2 + \qint \theta^3\xi^3e^{-2R\alpha}\phi^2\,dxdt  \notag\\
	&+\sint \theta^3\xi^3e^{-2R\alpha}\phi_{\Gamma}^2\,d\sigma  dt \notag
	\\
	\le& C_T \sint \beta^2|\phi_\Gamma|^2\,d\sigma dt \le  C_T\|\beta\|_{L^\infty(\Gamma)}^2\norm{\TT^*\Phi_T}{L^2(\Sigma_T)},
\end{align}
at first for $(\phi_T,\phi_{\Gamma,T})\in\mathbb W_\delta^{1,2}(\bOm)$ and then for $(\phi_T,\phi_{\Gamma,T})\in\mathbb X^2(\bOm)$ by using an approximation argument as at the end of the proof of Proposition \ref{prop-22}.
By \cite[Theorem IV.2.2]{Zab}, the estimate \eqref{IN-F} implies that $\textrm{Im}(\sop)\subset\textrm{Im}(\TT)$. This shows that for every $(U_0,f,g)\in\mathbb X^2(\bOm)\times\ZQ\times\ZS$, there exists a control $v\in L^2(\Sigma_T)$ such that $\sop(U_0,f,g)=\TT v$. Therefore, 
\begin{align*}
(u(\cdot,T),u_{\Gamma}(\cdot,T))=\sop(U_0,f,g)-\TT v=(0,0)
\end{align*}
 and the proof is finished.
\end{proof}

\end{document}